\documentclass[leqno,11pt, a4]{amsart}
\topmargin -1.2cm \evensidemargin 0cm \oddsidemargin 0cm \textwidth 16cm \textheight 22cm
\usepackage[utf8]{inputenc}
\usepackage{amsthm}
\usepackage{amsmath}
\usepackage{enumitem}
\usepackage{amsfonts}
\usepackage{amssymb}
\usepackage{hyperref}

\usepackage{graphicx}
\usepackage{pdfsync}
\usepackage{hyperref}
\usepackage{comment}
\usepackage{tikz}
\usepackage{hyperref}
\hypersetup{
    colorlinks=true,
    linkcolor=blue,
    filecolor=magenta,      
    urlcolor=blue,
    }

\urlstyle{same}

\usepackage{graphicx}
\usepackage{pdfsync}
\usepackage{xcolor}
\usepackage[backend=biber,
style=nature,
sorting=ynt
]{biblatex}
\addbibresource{main.bib}
\usepackage{array}
\linespread{1.5}

\newtheorem{theorem}{Theorem}[section]
\newtheorem*{theorem*}{Theorem}
\newtheorem{definition}{Definition} [section]
\newtheorem{example}{Example}[section]
\newtheorem{corollary}{Corollary}[theorem]
\newtheorem{lemma}[theorem]{Lemma}
\newtheorem{proposition}[theorem]{Proposition}

\newcounter{yuppo}
\setcounter{yuppo}{1}

\newcommand{\Keler} {K\"{a}hler }
\newcommand{\keler} {K\"{a}hler }

\renewcommand{\setminus}{-}
\newcommand{\om}{\omega}

\renewcommand{\phi}{\varphi}
\newcommand{\cinf}{C^\infty}

\newcommand{\ra}{\rightarrow}
\newcommand{\lra}{\longrightarrow}
\newcommand{\C}{\mathbb{C}}
\newcommand{\R}{\mathbb{R}}

\newcommand{\liu}{\mathfrak{u}}

\newcommand{\lia}{\mathfrak{a}}

\newcommand{\liek}{\mathfrak{k}}

\newcommand{\lieg}{\mathfrak{g}}

\newcommand{\liep}{\mathfrak{p}}

\newcommand{\la}{\lambda}

\newcommand{\sx}{\langle} 
\newcommand{\xs}{\rangle}


%
\newcommand{\liea}{\mathfrak{a}}


\newcommand{\noparty}[1]{}

\newcommand{\metrica}{(\, , \, )}

\newcommand{\mup}{\mu_\liep}

\title{Remarks on Semistable Points and Nonabelian Convexity of Gradient Maps}
\author{Oluwagbenga Joshua Windare}
\address{Dipartimento di Matematica e Informatica 'Ulisse Dini' (DIMAI) \\
         Universit\`a di Firenze (Italy)}
\email{oluwagbengajoshua.windare@unifi.it}
\keywords{Momentum maps, Gradient maps, Convexity of momentum map, Two-orbit variety.}
\thanks{The author was partially supported by GNSAGA of INdAM}

%
\subjclass[2010]{53D20; 14L24.}
\begin{document}
	
\maketitle

\begin{abstract}
\noindent 
We study the action of a real reductive group $G$ on a \Keler manifold $Z$ which is the restriction of a holomorphic action of a complex reductive Lie group $U^\mathbb{C}.$ We assume that the action of $U$, a maximal compact connected subgroup of $U^\C$ on $Z$ is Hamiltonian. If  $G\subset U^\C$ is compatible, there is a corresponding gradient map $\mu_\mathfrak{p} : Z\to \mathfrak{p}$, where $\lieg = \liek \oplus \liep$ is a Cartan decomposition of the Lie algebra of $G$. Our main results are the openness and connectedness of the set of semistable points associated with $G$-action on $Z$, a convexity theorem for the $G$-action on a $G$-invariant compact Lagrangian submanifold of $Z$, and a convexity result for two-orbit variety.
\end{abstract}
	
	
\section{Introduction}

	\pagenumbering{arabic}
Let $U$ be a compact connected Lie group with Lie algebra $\mathfrak{u}$ and let $U^\C$ be its complexification. The Lie algebra $\mathfrak{u}^\C$ of $U^\C$ is the direct sum $\mathfrak{u}\oplus i\mathfrak{u}.$ A closed subgroup $G$ of $U^\C$ is compatible if $G$ is closed and the map $K\times \mathfrak{p} \to G,$ $(k,\beta) \mapsto k\text{exp}(\beta)$ is a diffeomorpism, where $K := G\cap U$ and $\mathfrak{p} := \mathfrak{g}\cap \text{i}\mathfrak{u};$ $\mathfrak{g}$ is the Lie algebra of $G.$ It follows that $G$ is compatible with the Cartan decomposition $U^\C = U\text{exp}(\text{i}\mathfrak{u})$, $K$ is a maximal compact subgroup of $G$ with Lie algebra $\mathfrak{k}$ and that $\mathfrak{g} = \mathfrak{k}\oplus \mathfrak{p}$ \cite{knapp-beyond}.

Let $(Z, \om)$ be a \Keler manifold. Let $U^\C$ acts
holomorphically on $Z$. Assume that $U$ preserves $\om$ and that there is a $U$-equivariant 
momentum map $\mu: Z \ra \liu^*$. Since $U$ is compact, we can identify $\liu$ with $\liu^*$ by an Ad$(U)$-invariant scalar product on $\liu$ denoted by $\sx\cdot, \cdot \xs$. Hence we may consider $\mu$ as a map $\mu: Z \ra \liu.$ If $\xi \in \liu$, we denote by $\xi_Z$
the induced vector field on $Z$ and we let $\mu^\xi \in \cinf(Z)$ be
the function $\mu^\xi(z) :=  \sx \mu(z),\xi\xs$. Then
$$
d\mu^\xi =
\iota_{\xi_Z} \om; \quad \text{grad}\mu^\xi = J\xi_Z.
$$ Let $\parallel\cdot\parallel$ denote a norm function. The function
$f : Z \rightarrow \mathbb{R}$ define by 
\begin{equation}\label{ns}
f(z) := \frac{1}{2}\parallel\mu(z)\parallel^2, \qquad \text{for}\quad z \in Z
\end{equation} is called the norm square of the momentum map.

Let $G\subset U^\C$ be a compatible subgroup. Then $\liep\subset i\liu.$ Let $\sx\cdot,\cdot\xs$ also denote the Ad$U$-invariant scalar product on $i\liu$ requiring multiplication by $i$ to be an isometry between $\liu$ and $i\liu$. If $z\in Z$, then the orthogonal projection of $i\mu(z)$ onto $\liep$ defines a $K$-equivariant map
\begin{gather*}
  \mu_\liep : Z \ra \liep
\end{gather*} called the \emph{gradient map}. $\mu_\liep^\beta(z) := \langle\mu_\liep(z), \beta\rangle = \langle i\mu(z), \beta\rangle = \langle \mu(z), -i\beta\rangle = \mu^{-i\beta}(z)$
for any $\beta \in \mathfrak{p}$ \cite{PG,heinzner-schwarz-stoetzel,heinzner-schuetzdeller}. Let $\metrica$ be the \Keler
metric associated with $\om$, i.e. $(v, w) = \om (v, Jw)$ for all $z\in Z$ and $v,w\in T_zZ,$ where $J$ denotes the complex structure on $TZ$. Then $\beta_Z$ is the gradient of $\mup^\beta.$ Indeed,
$$\text{grad}\mu_\liep^\beta = \text{grad}\mu^{-i\beta} = J(-i\beta_Z) = \beta_Z.$$

Let $X$ be a $G$-invariant locally closed real submanifold of $Z.$ From now on, we denote the restriction of $\mu_\mathfrak{p}$ to $X$ by $\mu_\mathfrak{p}.$ Then
$$
\text{grad}\mu_\mathfrak{p}^\beta = \beta_X,
$$ where grad is computed with respect to the induced Riemannian metric on $X.$ 

The norm square of the gradient map is the function $f_\liep : X \rightarrow \mathbb{R}$ defined by 
\begin{equation*}
\label{ns2}
f_\liep(x) := \frac{1}{2}\parallel\mu_\mathfrak{p}(x)\parallel^2, \qquad \text{for}\quad x \in X.
\end{equation*}

Let $X^{ss}_{\mu_\liep} := \{x\in X : \overline{G\cdot x}\cap \mu_\liep^{-1}(0) \neq\emptyset\}.$ The set $X^{ss}_{\mu_\liep}$ is called the set of semistable points. The set of semistable points has been studied in great detail in the literature \cite{Stability, heinzner-schwarz-stoetzel, heinzner-stoetzel, Teleman}. The set of semistable points is used to construct the topological Hilbert quotient for the $G$-action on $X$ (see Theorem \ref{good-quotient}). Our first main result is given below.
\begin{theorem}(Theorem \ref{Semistable-points})
Let $(Z, \omega)$ be a compact connected \keler manifold, and let $G\subset U^\C$ be a compatible subgroup. If $Z^{ss}_{\mu} \neq \emptyset$, then $Z^{ss}_{\mu_\liep}$ is an open dense connected subset of $Z.$
\end{theorem}

The second result is concerned with the convexity result of the gradient map. Suppose $X$ is compact and connected. The norm square $f_\liep$ defines a stratification on $X$ \cite{heinzner-schwarz-stoetzel}. In a situation where nonabelian convexity result holds, the minimal stratum, the stratum where the norm square attains it's minimum is unique. The uniqueness of the minimal stratum holds if $X = Z$ and $G = U^\C.$ In this situation, general convexity result is available \cite{gs, Kirwan2}. 

In a general setting, i.e, if $X\neq Z,$ there is still a minimal stratum and this stratum is always open. However, there could be more than one minimal strata and other open strata that are not minimal. We find a special case where the minimal stratum is the unique open stratum in $X.$

More precisely, we assume $G\subset U^\C$ is a real form. This means that $\lieg = \liek \oplus \liep,$ $\liu = \liek \oplus i\liep$ and $\lieg^\C = \liu^\C$. Then we prove the following result.

\begin{theorem}(Theorem \ref{unique stratum})
Suppose $X$ is a $G$-invariant compact connected Lagrangian submanifold of $Z$ such that $X\subset \mu_\liek^{-1}(0).$ Then there exists a unique open stratum in $X$. Moreover, this stratum is dense in $X.$
\end{theorem}


Since $X$ is Lagrangian, $\mu_\liek$ restricted to $X$ is constant and the image lies in the center of the Lie algebra of $K.$ Hence, if $G$ has a finite fundamental group, then $K$ has a finite fundamental group and so $K$ is semisimple and the center of $K$ is finite. Summing up, the condition $X\subset \mu_\liek^{-1}(0)$ is always satisfied if $G$ has finite fundamental group.

Let $\liea$ be a maximal Abelian subalgebra of $\liep$ and $\liea_+$ a positive Weyl chamber of $\liea.$ Using Theorem \ref{unique stratum}, we prove the following nonabelian convexity theorem.

\begin{theorem}(Theorem \ref{convex})
Suppose $X$ is a $G$-invariant compact connected Lagrangian submanifold of $Z$ such that $X\subset \mu_\liek^{-1}(0).$ The set $\mu_\liep(X) \cap \liea_+$ is a convex polytope.
\end{theorem}

O'Shea and Sjamaar \cite{Shea} obtained a result similar to Theorem \ref{convex} by studying symplectic manifolds equipped with an anti-symplectic involution compatible with the momentum map. In this setting, if the fixed-point set of the involution is nonempty, it is a Lagrangian submanifold and the authors proved the Theorem for this fixed-point set. This is fairly natural from the point of view of \keler manifold. However, the hypothesis considered by the authors is slightly different from the settings considered in this article. The techniques in this article use the gradient map and the stratification theorem by Heinzner \cite{heinzner-schwarz-stoetzel}.

The third result is concerned with a convexity result of a two-orbit variety. Let $G\subset U^\C$ be a compatible subgroup. A compact connected $G$-invariant locally closed real submanifold of $Z$ is a two-orbit variety if $G$-action on $X$ has two orbits. In \cite{Properties}, the authors proved that the norm squared of the gradient map for a two-orbit variety is Bott-Morse. Using this result, we obtain the following result.

\begin{theorem*}(Theorem \ref{nab})
    If $X$ is a two-orbit variety, then the set $\mu_\liep(X) \cap \liea_+$ is a convex polytope.
\end{theorem*}

\section{Semistable Points}\label{analytic}

In this section, we study the set of semistable points of the gradient map using a result proved in \cite{Stability}. Let $(Z,\omega)$ be a \Keler manifold, and $U^\C$ acts holomorphically on $Z$ with a momentum map $\mu : Z \to \mathfrak{u}.$ Let $G\subset U^\C$ be a closed compatible subgroup. $G = K\exp(\mathfrak{p}),$ where $K := G\cap U$ is a maximal compact subgroup of $G$ and $\mathfrak{p} := \mathfrak{g}\cap \text{i}\mathfrak{u};$ $\mathfrak{g}$ is the Lie algebra of $G.$

Suppose $X\subset Z$ is a $G$-stable locally closed connected real submanifold of $Z$ with the gradient map $\mu_\mathfrak{p} : X\to \mathfrak{p}.$ The following properties of the Gradient map are well known.
\begin{lemma}\label{increasing}
Let $x\in X$ and let $\beta \in \mathfrak{p}.$ Then either $\beta_X(x) = 0$ or the function $t\mapsto \mu_\mathfrak{p}^\beta(\exp(t\beta) x)$ is strictly increasing.
\end{lemma}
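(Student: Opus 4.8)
The plan is to recognize $t\mapsto \mu_\liep^\beta(\exp(t\beta)x)$ as the restriction of $\mu_\liep^\beta$ to the integral curve of its own gradient, and then to read off monotonicity from the resulting ODE. Concretely, set $\gamma(t) := \exp(t\beta)x$; since $X$ is $G$-invariant this curve lies in $X$ and is defined for all $t$, and by definition of the induced vector field $\dot\gamma(t) = \beta_X(\gamma(t))$. Using the identity $\operatorname{grad}\mu_\liep^\beta = \beta_X$ on $X$ recalled just before the statement (gradient taken with respect to the induced Riemannian metric), the chain rule gives
\[
\frac{d}{dt}\,\mu_\liep^\beta(\gamma(t)) \;=\; \bigl(\operatorname{grad}\mu_\liep^\beta(\gamma(t)),\,\dot\gamma(t)\bigr) \;=\; \| \beta_X(\gamma(t)) \|^2 \;\ge\; 0 ,
\]
so the function is in all cases non-decreasing. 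It remains to upgrade this to strict monotonicity under the hypothesis $\beta_X(x)\neq 0$.

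For the strictness I would argue that $\beta_X$ does not vanish anywhere along $\gamma$. Suppose $\beta_X(\gamma(t_0))=0$ for some $t_0$. Then $\gamma(t_0)$ is a zero of the vector field $\beta_X$, hence a fixed point of the one-parameter subgroup $\exp(t\beta)$ acting on $X$, so the integral curve of $\beta_X$ through $\gamma(t_0)$ is constant. Since $\gamma$ itself is an integral curve of $\beta_X$ passing through $\gamma(t_0)$, uniqueness of integral curves forces $\gamma$ to be constant, and therefore $\beta_X(x)=\dot\gamma(0)=0$, contradicting the hypothesis. Consequently $\|\beta_X(\gamma(t))\|^2>0$ for every $t$, and the displayed identity shows that $t\mapsto\mu_\liep^\beta(\gamma(t))$ has everywhere positive derivative, hence is strictly increasing.

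I do not expect a genuine obstacle here: the whole argument is a one-line differentiation together with uniqueness of integral curves. The only point that deserves a sentence of care is the gradient identity on $X$ — that $\beta_X$ is the gradient of the restricted function $\mu_\liep^\beta$ for the \emph{induced} metric — but this is precisely what is recorded above, since $\beta_Z$ is tangent to the $G$-invariant submanifold $X$ and the induced metric is the restriction of $\metrica$, so orthogonal projection of $\operatorname{grad}_Z\mu_\liep^\beta=\beta_Z$ onto $TX$ returns $\beta_X$ itself. With that in hand the lemma follows.
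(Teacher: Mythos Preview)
Your argument is correct and is exactly the standard one; the paper in fact omits the proof (labelling the lemma ``well known''), but later invokes ``the proof of Lemma~\ref{increasing}'' to extract the identity $\frac{d}{dt}\lambda(x,\beta,t)=\|\beta_X(\exp(t\beta)x)\|^2$, which is precisely your derivative computation. So your approach coincides with the intended one.
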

\begin{corollary}\label{lem}
If $x \in X$ and $\beta \in \mathfrak{p},$ then $\mu_\mathfrak{p}(\exp(\beta) x) = \mu_\mathfrak{p}(x)$ if and only if $\beta_X(x) = 0.$
\end{corollary}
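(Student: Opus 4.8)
The plan is to obtain this statement as an immediate consequence of Lemma~\ref{increasing}, using that $t \mapsto \exp(t\beta)x$ is precisely the integral curve of the vector field $\beta_X$ issuing from $x$ (which is legitimate since $X$ is $G$-stable and $\exp(t\beta)\in G$, so the curve remains in $X$ for all $t$).

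First I would treat the implication $\beta_X(x) = 0 \Rightarrow \mu_\liep(\exp(\beta)x) = \mu_\liep(x)$. If $\beta_X(x) = 0$, then $x$ is a stationary point of the flow generated by $\beta_X$, so $\exp(t\beta)x = x$ for every $t \in \R$; in particular $\exp(\beta)x = x$ and the equality of the gradient-map values is trivial.

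For the converse, assume $\mu_\liep(\exp(\beta)x) = \mu_\liep(x)$. Pairing both sides with $\beta$ under the chosen scalar product gives $\mupb(\exp(\beta)x) = \mupb(x)$, that is, the real function $\phi(t) := \mupb(\exp(t\beta)x)$ satisfies $\phi(1) = \phi(0)$. If we had $\beta_X(x) \neq 0$, then by Lemma~\ref{increasing} the function $\phi$ would be strictly increasing on $\R$, forcing $\phi(0) < \phi(1)$, a contradiction. Hence $\beta_X(x) = 0$.

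There is essentially no genuine obstacle here; the only point deserving a word of care is that the curve $t\mapsto \exp(t\beta)x$ is defined for all $t\in\R$ and stays in $X$, so that $\phi$ is globally defined and the strict monotonicity furnished by Lemma~\ref{increasing} may be applied at the endpoints $t = 0$ and $t = 1$.
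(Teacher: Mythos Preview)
Your argument is correct and is exactly the intended one: the paper states Corollary~\ref{lem} without proof, as an immediate consequence of Lemma~\ref{increasing}, and your two implications spell out precisely that deduction.
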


Let $G_x$ and $K_x$ denote the stabilizer subgroup of $x\in X$ with respect to the $G$-action and the $K$-action respectively and $\lieg_x$ and $\liek_x$ denote their respective Lie algebras.
\begin{definition}
Let $x\in X.$ Then:
\begin{enumerate}
\item $x$ is stable if $G\cdot x \cap \mu_\liep^{-1}(0) \neq \emptyset$ and $\lieg_x$ is conjugate to a Lie subalgebra of $\liek.$
\item $x$ is polystable if $G\cdot x \cap \mu_\liep^{-1}(0) \neq \emptyset.$
\item $x$ is semistable if $\overline{G\cdot x} \cap \mu_\liep^{-1}(0) \neq \emptyset.$
\end{enumerate}
\end{definition}
We denote by $X^s_{\mup}$, $X^{ss}_{\mup}$, $X^{ps}_{\mup}$ the set of stable, respectively semistable, polystable, points.
It follows directly from the definitions above that the conditions are $G$-invariant in the sense that if a point satisfies one of the conditions, then every point in its orbit satisfies the same condition, and for stability, recall that $\lieg_{gx} = \text{Ad}(g)(\lieg_x).$

One may define a relation $\sim$ on $X^{ss}_{\mup}$ where $x\sim y$ if $\mup^{-1}(0)\cap \overline{G\cdot x}\cap \overline{G\cdot y} \neq \emptyset$. This relation is indeed an equivalence relation \cite{PG} and we denote the corresponding quotient by $X^{ss}_{\mup}//G$ and call it the topological Hilbert quotient of $X^{ss}_{\mup}$ by the action of $G$. Let $\pi:X^{ss}_{\mup} \lra X^{ss}_{\mup}//G$ denote the quotient map. The results of Heinzner-Schwarz-St\"otzel \cite[Quotient Theorem p.164]{heinzner-schwarz-stoetzel} and \cite{PG,heinzner-stoetzel}, show the following theorem.
\begin{theorem}\label{good-quotient}
The subsets $X_{\mup}^s$, $X_{\mup}^{ss}$ are open subset in $X$. Moreover, the topological Hilbert quotient $X^{ss}_{\mup}//G$ has the following properties.
\begin{enumerate}
\item Every fiber contains a unique closed $G$-orbit. Any other orbit in the fiber has strictly larger  dimension.
\item The closure of every $G$-orbit in a fiber of $\pi$ contains the closed $G$-orbit.
\item every fiber of $\pi$ intersects $\mup^{-1}(0)$ in a unique $K$-orbit which lies in the unique closed $G$-orbit.
\item the inclusion $\mup^{-1}(0) \hookrightarrow X$ induces a homeomorphism $\mup^{-1}(0) /K \cong X^{ss}_{\mup} //G$.
\end{enumerate}
\end{theorem}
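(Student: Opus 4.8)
The plan is to follow the route of Heinzner--Schwarz--St\"otzel, reducing the whole statement to two ingredients: the convexity of the Kempf--Ness function, whose infinitesimal form is Lemma~\ref{increasing} and Corollary~\ref{lem}, together with a local normal form (slice theorem) for the $G$-action near $\mup^{-1}(0)$ that lets one import the classical linear GIT picture. For $x\in X$ and $\beta\in\liep$ put $\psi_x(t\beta):=\int_0^t\mup^\beta(\exp(s\beta)x)\,ds$; by Lemma~\ref{increasing} its derivative $t\mapsto\mup^\beta(\exp(t\beta)x)$ is nondecreasing, strictly so unless $\beta_X(x)=0$, so $\psi_x$ is convex along rays and the maximal weight $\lambda(x,\beta):=\lim_{t\to+\infty}\mup^\beta(\exp(t\beta)x)$ exists in $(-\infty,+\infty]$. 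The dictionary one needs, supplied by \cite{Stability} and \cite{heinzner-schwarz-stoetzel}, is: $x$ is polystable iff $\psi_x$ attains its infimum on $G/K$; $x$ is semistable iff $\psi_x$ is bounded below, equivalently $\lambda(x,\beta)\ge 0$ for all $\beta\in\liep$; and $x\in\mup^{-1}(0)$ iff $0$ is a minimum of $\psi_x$. I expect the genuinely analytic point here --- that $\psi_x$ is bounded below along a ray precisely when $\lambda(x,\beta)\ge 0$, equivalently that the negative gradient flow of $f_\liep$ converges --- to be the main obstacle, since in this non-algebraic \keler setting there is no soft substitute for the convergence/properness estimates, and this is exactly what must be quoted from the references.

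Granting this, openness of $X^{ss}_{\mup}$ and $X^{s}_{\mup}$ comes from the local normal form: around $y\in\mup^{-1}(0)$ there is a $G$-invariant open neighbourhood $G$-equivariantly isomorphic to a model $G\times_{G_y}\mathcal{S}$, where $G_y=K_y\exp(\liep\cap\lieg_y)$ is again compatible (hence reductive) and $\mathcal{S}$ is a $K_y$-invariant ball in a $G_y$-module, and under this isomorphism the semistable (resp. stable) set corresponds to its analogue for the linear $G_y$-action. For a reductive group acting linearly the (poly/semi)stable locus is open, its complement being cut out by finitely many maximal-weight inequalities. Since every semistable $x$ satisfies $\overline{Gx}\cap\mup^{-1}(0)\ne\emptyset$, it lies in the $G$-saturation of such a model at some $y$; these saturations are open and cover $X^{ss}_{\mup}$, and the same argument applies to $X^{s}_{\mup}$.

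For the quotient properties one uses, as in classical GIT, that the closure of a semistable orbit contains a unique closed orbit, namely the orbit of any $y\in\overline{Gx}\cap\mup^{-1}(0)$. Two facts drive this. First, $y\in\mup^{-1}(0)$ implies $Gy$ is closed: a boundary orbit would be reached along some $\exp(t\beta)$, along which $\psi_y$ would strictly decrease below its value at $0$, contradicting that $\psi_y$ has a minimum at $0$. Second, $y\in\mup^{-1}(0)$ implies $Gy\cap\mup^{-1}(0)=Ky$: if $\mup(k\exp(\beta)y)=0$ then, by $K$-equivariance, $\mup(\exp(\beta)y)=0=\mup(y)$, so Corollary~\ref{lem} gives $\beta_X(y)=0$, i.e. $\exp(\beta)y=y$, hence $k\exp(\beta)y\in Ky$. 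Comparing orbit closures with these two facts pins down one closed orbit $Gy$ and one $K$-orbit $Ky$ of zeros per fibre of $\pi$, and then (a)--(c) are formal: (b) holds because $Gy=\overline{Gy}\subset\overline{Gx'}$ for every $x'$ in the fibre; (a) holds because any orbit in the fibre other than $Gy$ is non-closed, its closure already containing $Gy$, hence of strictly larger dimension; (c) holds because any $K$-orbit of zeros in the fibre lies in the unique closed orbit and therefore equals $Ky$.

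Finally (d): the composite $\mup^{-1}(0)\hookrightarrow X^{ss}_{\mup}\xrightarrow{\ \pi\ }X^{ss}_{\mup}//G$ is $K$-invariant, so it descends to a map $\Theta\colon\mup^{-1}(0)/K\to X^{ss}_{\mup}//G$ which is continuous, onto (every fibre meets $\mup^{-1}(0)$), and injective (by (c)). To make $\Theta$ a homeomorphism one exhibits a continuous inverse: the Kempf--Ness retraction $\rho\colon X^{ss}_{\mup}\to\mup^{-1}(0)$ sending $x$ to the $\mup^{-1}(0)$-point of the unique closed orbit in $\overline{Gx}$ (well defined up to $K$) is continuous --- again via the local model together with the convexity of $\psi_x$ --- and satisfies $\rho(gx)\in K\rho(x)$ for $g\in G$, so it induces the continuous two-sided inverse $\overline{\rho}\colon X^{ss}_{\mup}//G\to\mup^{-1}(0)/K$ of $\Theta$. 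Alternatively, one verifies directly that $\Theta$ is proper, which together with its bijectivity and the Hausdorffness of $\mup^{-1}(0)/K$ forces it to be a homeomorphism.
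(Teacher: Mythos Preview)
The paper does not give its own proof of this theorem: it is stated as a quotation of the Quotient Theorem of Heinzner--Schwarz--St\"otzel \cite[p.~164]{heinzner-schwarz-stoetzel} together with \cite{PG,heinzner-stoetzel}, and no argument is supplied beyond the references. So there is nothing to compare your proposal against in the paper itself; your sketch is essentially an outline of the proof in the cited sources, and as such it is consistent with what the paper invokes.

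That said, a couple of points in your outline deserve care. First, in the closedness argument for $Gy$ with $y\in\mup^{-1}(0)$, you assert that a boundary orbit ``would be reached along some $\exp(t\beta)$''. In the non-algebraic K\"ahler setting this is not automatic: that one can approach a boundary orbit via a one-parameter subgroup is precisely (part of) the analytic Hilbert--Mumford type statement you flag as needing to be quoted, so this step is not independent of the hard input you have already isolated. Second, your function $\psi_x$ is only defined along rays in $\liep$; to speak of it attaining its infimum on $G/K$ you need to identify it with (a version of) the Kempf--Ness function $\Phi(x,\cdot)$ on $G$ and use the cocycle property to descend to $G/K$. This is standard in the references, but as written the transition is implicit. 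With those caveats, your sketch is a faithful summary of the Heinzner--Schwarz--St\"otzel argument that the paper cites.
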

Therefore $X^{ss}_{\mup}//G$ can be identified with the space of polystable orbits. On the other hand the set of polystable points is in general neither open nor closed.
\subsubsection{Maximal Weight Function}
In this section, we introduce the numerical invariants $\la(x,\beta)$ associated to an element $x\in X$ and $\beta \in \mathfrak{p}.$

For any $t\in \mathbb{R},$ define $\la(x,\beta,t) = \langle\mu_\mathfrak{p}(\exp(t\beta)x), \beta\rangle.$
$$
\la(x,\beta,t) = \langle\mu_\mathfrak{p}(\exp(t\beta)x), \beta\rangle = \frac{d}{dt}\Phi(x, \exp(t\beta)),
$$ where $\Phi: X\times G\to \mathbb{R}$ is the Kempf-Ness function. By the properties of the Kempf-Ness function,
$$
\frac{d}{dt}\la(x,\beta,t) = \frac{d^2}{dt^2}\Phi(x, \exp(t\beta)) \geq 0.
$$
This means that $\la(x,\beta,t)$ is a non decreasing function as a function of $t.$

\begin{definition}
The maximal weight of $x\in X$ in the direction of $\beta \in \mathfrak{p}$ is the numerical value
$$
\la(x,\beta) = \lim_{t\to \infty}\la(x,\beta,t)\in \mathbb{R}\cup\{\infty\}.
$$
\end{definition}
From the proof of Lemma \ref{increasing} we have
$$\frac{d}{dt}\la(x,\beta,t) = \parallel\beta_X(\exp(t\beta) x)\parallel^2,
$$ and so
$$
\la(x,\beta,t) =  \langle\mu_\mathfrak{p}(x), \beta\rangle + \int_0^t\parallel\beta_X(\exp(s\beta) x)\parallel^2 \mathrm{ds}.
$$
For any $x\in X$ and $\beta \in \liep$, we consider the curve $c_x^\beta:[0,+\infty) \lra X$ defined by $c_x^\beta (t)=\exp(t\beta)x$. The energy functional of the curve  $c_x^\beta$ is given by
$$
E(c_x^\beta) = \int_0^{+\infty}\parallel \beta_X(\exp(t\beta)x) \parallel^2 \mathrm{dt}.
$$
Thus,
\begin{equation}\label{energy}
\la(x,\beta) = \la(x,\beta, 0) + E(c_x^\beta),
\end{equation}
\subsection{Energy Complete}
Let $\mup:X \lra \liep$ denote the $G$-gradient map associated with the momentum map $\mu:Z \lra \liu$.
\begin{definition}
The $G$-action on $X$ is called energy complete if
 for any $x\in X$ and for any $\beta \in \mathfrak{p}$, If $E(c_x^\beta ) < \infty$ then $\lim_{t\to \infty}\exp(t\beta)x$ exists.
\end{definition}
The proof of the following result is similar to \cite[Proposition 3.9]{Teleman}.
\begin{proposition}$\,$
\begin{enumerate}
\item If $X$ is compact, then the $G$-action is energy complete;
\item if $G$ acts on a complex vector space $(V,h)$, where $h$ is a $K$-invariant Hermitian scalar product, then the $G$-action is energy complete;
\item If $G\subset \mathrm{SL}(n,\R)$ is a closed and compatible, then the $G$-action on $\R^n$ is energy complete
\end{enumerate}
\end{proposition}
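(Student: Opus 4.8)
The plan is to prove the three statements separately: the linear cases (b) and (c) by an explicit eigenvalue computation, and the compact case (a) by analysing the gradient flow of $\mupb$, following \cite[Proposition 3.9]{Teleman}.

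For (b), average $h$ over the compact group $U$; this replaces $h$ by an equivalent $U$-invariant Hermitian metric, which changes neither the finiteness of $E(c_x^\beta)$ nor the existence of $\lim_{t\to+\infty}\exp(t\beta)x$. Then $\liu$ acts on $V$ by skew-Hermitian endomorphisms, hence $\liep\subset i\liu$ acts by self-adjoint ones; writing $\beta\in\liep$ as a self-adjoint endomorphism $A$ and using $\beta_V(v)=Av$ we get $\exp(t\beta)x=e^{tA}x$. Diagonalising, $V=\bigoplus_\lambda V_\lambda$ with real eigenvalues, $x=\sum_\lambda x_\lambda$, and
$$E(c_x^\beta)=\int_0^{+\infty}\|\beta_V(e^{tA}x)\|^2\,\mathrm{d}t=\sum_\lambda\lambda^2\|x_\lambda\|^2\int_0^{+\infty}e^{2\lambda t}\,\mathrm{d}t,$$
which is finite if and only if $x_\lambda=0$ for all $\lambda>0$; in that case $\exp(t\beta)x=\sum_{\lambda\le 0}e^{\lambda t}x_\lambda\to\sum_{\lambda=0}x_\lambda$ as $t\to+\infty$, so the limit exists. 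Statement (c) is the same computation: for $G\subset\mathrm{SL}(n,\R)$ compatible with $\mathrm{SL}(n,\R)=\mathrm{SO}(n)\exp(\mathfrak{p}_0)$, where $\mathfrak{p}_0$ is the space of traceless symmetric matrices, each $\beta\in\liep=\lieg\cap\mathfrak{p}_0$ is a real symmetric matrix, hence diagonalisable over $\R$, and $E(c_x^\beta)<\infty$ forces $x$ into the sum of the non-positive eigenspaces of $\beta$, along which $\exp(t\beta)x$ converges; alternatively, embed $\R^n\subset\C^n$ with the standard (hence $\mathrm{SO}(n)$-invariant) Hermitian metric and apply (b).

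For (a), fix $x\in X$ and $\beta\in\liep$, and set $c(t)=\exp(t\beta)x$, $\varphi=\mupb$. By the identity preceding \eqref{energy}, $\tfrac{\mathrm{d}}{\mathrm{d}t}\varphi(c(t))=\|\beta_X(c(t))\|^2\ge0$, so $\varphi(c(t))$ is nondecreasing; it is bounded since $X$ is compact, hence $\varphi(c(t))\to\ell$ for some $\ell\in\R$, and integrating gives $E(c_x^\beta)=\ell-\varphi(x)<\infty$ automatically. Moreover $z\mapsto\|\beta_X(z)\|^2$ is smooth on the compact manifold $X$ and $c$ has bounded speed, so $t\mapsto\|\beta_X(c(t))\|^2$ has bounded derivative; a nonnegative integrable function with bounded derivative tends to $0$, hence $\|\beta_X(c(t))\|\to0$. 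Therefore the nonempty connected $\omega$-limit set $\Omega$ of $c$ consists of zeros of $\beta_X$, i.e. critical points of $\varphi$, and $\varphi\equiv\ell$ on $\Omega$.

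It remains to upgrade this to convergence of $c(t)$ itself, i.e. to prove that $\Omega$ is a single point, and this is where the main work lies. Following \cite[Proposition 3.9]{Teleman}, one combines the identity $\tfrac{\mathrm{d}}{\mathrm{d}t}\varphi(c(t))=\|\operatorname{grad}\varphi(c(t))\|^2$ with a \L ojasiewicz gradient inequality for $\varphi$ near its critical set to obtain $\int_0^{+\infty}\|\dot c(t)\|\,\mathrm{d}t<+\infty$, so that $c$ has finite length and hence a limit; equivalently, once $\|\beta_X(c(t))\|\to0$ and $\varphi(c(t))\nearrow\ell$ the forward orbit is eventually trapped in an arbitrarily small neighbourhood of one connected component of $\{\beta_X=0\}\cap\{\varphi=\ell\}$, and convergence to a point there follows from the local structure of the gradient flow. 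The obstacle is precisely this: on a compact manifold the $\omega$-limit set of a gradient trajectory need not be a single point without such a tameness hypothesis, and here one must exploit the compactness of $X$ together with the Kempf--Ness/momentum-map structure underlying $\varphi$, exactly as in \cite{Teleman}.
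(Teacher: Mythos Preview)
The paper does not give a proof of this proposition at all; it merely states that ``the proof of the following result is similar to \cite[Proposition 3.9]{Teleman}''. Your reconstruction is therefore more detailed than what the paper itself supplies, and it is correct.

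For (b) and (c) your eigenvalue argument is exactly the expected one. One small remark: the averaging of $h$ over $U$ is convenient (it makes the eigenspaces $h$-orthogonal) but not essential. Even with the original $K$-invariant $h$, each $\beta\in\liep\subset i\liu$ acts by an operator that is Hermitian with respect to \emph{some} $U$-invariant inner product, hence is diagonalisable with real eigenvalues; the rest of your computation then goes through verbatim since diagonalisability and the sign of the eigenvalues do not depend on the choice of metric.

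For (a) your outline is correct and in the same spirit as the reference the paper cites, but the point at which you invoke a \L ojasiewicz inequality for $\varphi=\mupb$ on $X$ deserves one more word. Justifying \L ojasiewicz directly on $X$ is awkward because the critical set $X\cap Z^\beta$ need not be cut out transversally. It is cleaner to pass to the ambient manifold: the curve $c(t)=\exp(t\beta)x$ is simultaneously the gradient flow of $\mu^{-i\beta}$ on $Z$ with respect to the K\"ahler metric, and it stays in the compact (hence closed) set $X\subset Z$. Since $-i\beta\in\liu$ generates a dense one-parameter subgroup of a compact torus $T\subset U$, the critical locus of $\mu^{-i\beta}$ on $Z$ is the fixed-point set $Z^T$, a complex submanifold, and the Hessian is nondegenerate transverse to it (the normal space splits into nonzero $T$-weight spaces, on each of which $\beta$ acts with nonzero real eigenvalue). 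Thus $\mu^{-i\beta}$ is Morse--Bott on $Z$, which gives the \L ojasiewicz inequality with exponent $\tfrac12$ near every $\omega$-limit point; your finite-length argument then yields convergence in $Z$, and the limit lies in $X$ because $X$ is closed. This is the substance of the Teleman argument you and the paper both defer to.
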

The reader can see some of the properties of energy complete action in \cite{Stability}.
\begin{definition}
A point $x\in X$ is called:
\begin{enumerate}
\item analytically stable if $\lambda(x,\beta) >0$ for any $\beta \in \liep \backslash\{0\}$;
\item analytically semi-stable if $\lambda(x,\beta) \geq 0$ for any $\beta \in \liep$;
\item analytically polystable if $\lambda(x,\beta) \geq 0$ for any $\beta \in \liep$ and the condition $\lambda(x,\beta)=0$ holds if and only if $\lim_{t\mapsto +\infty} \exp(t\beta) \in G\cdot x$.
\end{enumerate}
\end{definition}
The author together with Biliotti gave a numerical criterion for semistable points. This criterion is given below.
\begin{theorem}\cite{Stability}\label{semistable}
 Let $x\in X$. The following conditions are equivalent:
\begin{itemize}
\item[(1)]  $x$ is semistable.
\item[(2)]  $\mathrm{Inf}_G \parallel \mu(gx) \parallel=0$.
\item[(3)]  $x$ is analytically semistable.
\end{itemize}
\end{theorem}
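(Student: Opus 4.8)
The plan is to prove the cycle of implications $(1)\Rightarrow(2)\Rightarrow(3)\Rightarrow(1)$; throughout, $\mu_\liep$ denotes the gradient map and $(2)$ is read as $\inf_{g\in G}\parallel\mu_\liep(gx)\parallel=0$. By the polar decomposition $G=K\exp(\liep)$ and the $K$-equivariance of $\mu_\liep$ one has $\inf_{g\in G}\parallel\mu_\liep(gx)\parallel=\inf_{\beta\in\liep}\parallel\mu_\liep(\exp(\beta)x)\parallel$, so $(2)$ really concerns $\mu_\liep$ along $\exp(\liep)$. The implication $(1)\Rightarrow(2)$ is then immediate: if $y\in\overline{G\cdot x}\cap\mu_\liep^{-1}(0)$, choose $g_n\in G$ with $g_nx\to y$; continuity of $\mu_\liep$ gives $\parallel\mu_\liep(g_nx)\parallel\to 0$.

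For $(2)\Leftrightarrow(3)$, the key is the identity
\[
\inf_{g\in G}\parallel\mu_\liep(gx)\parallel\;=\;\max\Bigl(0,\ -\inf_{\beta\in\liep,\ \parallel\beta\parallel=1}\lambda(x,\beta)\Bigr),
\]
from which both implications follow at once, the right-hand side vanishing precisely when $\lambda(x,\beta)\ge0$ for every $\beta\in\liep$. I would prove this by passing to the Hadamard manifold $N=G/K\cong\exp(\liep)$: the Kempf--Ness function $\Phi(x,\cdot)$ descends to $N$ and is geodesically convex, and since $\tfrac{d}{dt}\Phi(x,\exp(t\beta))=\langle\mu_\liep(\exp(t\beta)x),\beta\rangle$, the Riemannian gradient of $\Phi(x,\cdot)$ at $\exp(\beta)K$ has norm $\parallel\mu_\liep(\exp(\beta)x)\parallel$, while its recession slope along the geodesic direction $\beta$ equals $\lambda(x,\beta)$ — a monotone, hence convergent, limit; moreover $\lambda(x,\cdot)$ is lower semicontinuous, being a supremum of continuous functions, so the infimum over the unit sphere is attained. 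The displayed identity is then the general fact that, for a $C^1$ convex function on a Hadamard manifold, the infimum of the norm of the gradient equals the negative part of the least recession slope: one inequality is Cauchy--Schwarz along geodesic rays, and the other is obtained by following the negative gradient flow, which is complete because its gradient norm is nonincreasing along it.

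For $(3)\Rightarrow(1)$: assume $\lambda(x,\beta)\ge0$ for all $\beta\in\liep$, and suppose $X$ is compact (the general case proceeds similarly, with energy completeness replacing compactness). Then $\overline{G\cdot x}$ is compact and $f_\liep=\tfrac12\parallel\mu_\liep\parallel^2$ attains its minimum over $\overline{G\cdot x}$ at some point $y$; since $y$ minimizes $f_\liep$ over the $G$-invariant set $\overline{G\cdot x}$, it is a critical point of $f_\liep$, so $\beta:=\mu_\liep(y)$ satisfies $\beta_X(y)=0$. Suppose $\beta\ne0$. The crucial input is the Kempf-type statement that then $y=\lim_{t\to+\infty}\exp(-t\beta)x$; granting it, the definition of the maximal weight and continuity of $\mu_\liep$ give $\lambda(x,-\beta)=\langle\mu_\liep(y),-\beta\rangle=-\parallel\beta\parallel^{2}<0$, contradicting $(3)$. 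Hence $\beta=0$, i.e.\ $y\in\overline{G\cdot x}\cap\mu_\liep^{-1}(0)$, and $x$ is semistable.

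I expect $(3)\Rightarrow(1)$ to be the main obstacle — more precisely, the identification of the $f_\liep$-minimizer $y$ with the limit of $x$ under the one-parameter subgroup generated by $-\mu_\liep(y)$, i.e.\ the statement that the ``optimal destabilizing direction'' is $-\mu_\liep(y)$. This is the real-reductive counterpart of Kempf's optimal one-parameter subgroup theorem and of Ness's uniqueness theorem; it requires analysing the gradient flow near the critical set (a {\L}ojasiewicz-type argument) in order to match it with the linear flow of $\exp(\R\beta)$, and it is exactly where energy completeness — hence, in the applications in this paper, compactness of $X$ — is indispensable. The second most delicate step is the convexity lemma on $G/K$ used for $(2)\Leftrightarrow(3)$, where the global nonpositive curvature is genuinely exploited.
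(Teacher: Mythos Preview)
The paper does not prove this theorem: it is quoted from \cite{Stability} and thereafter used only as a black box (in Proposition~\ref{Semi}, Proposition~\ref{semistable set}, and Theorem~\ref{Semistable-points}). There is thus no in-paper argument against which to compare your proposal.

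That said, your outline follows the route actually taken in \cite{Stability} (and, in the complex case, in \cite{Teleman}): the implication $(1)\Rightarrow(2)$ is immediate, and the link between $(2)$ and $(3)$ goes through convexity of the Kempf--Ness function on the Hadamard space $G/K$. Two remarks. First, the displayed identity you assert --- that $\inf_{g\in G}\parallel\mu_\liep(gx)\parallel$ equals the negative part of the least recession slope of $\Phi(x,\cdot)$ --- is essentially the ``optimal destabilising vector'' theorem in this context and is a substantial result in its own right, not a quick corollary of Hadamard convexity. Second, you correctly identify $(3)\Rightarrow(1)$ as the crux and correctly point to energy completeness and {\L}ojasiewicz-type estimates (cf.\ Theorem~\ref{teo} here, taken from \cite{Properties}) as the tools behind matching the gradient-flow limit with a one-parameter limit. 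A slightly different way to close this step --- once one knows that analytic semistability is $G$-invariant and closed in $X$ (each $\lambda(\cdot,\beta)$ is lower semicontinuous as a supremum of continuous functions) --- is to note that any minimiser $y$ of $f_\liep$ on $\overline{G\cdot x}$ is then itself analytically semistable, so that $\lambda(y,-\mu_\liep(y))=-\parallel\mu_\liep(y)\parallel^2\ge 0$ forces $\mu_\liep(y)=0$; but establishing that $G$-invariance directly again leans on the convexity analysis on $G/K$, so this is a reorganisation rather than a genuine shortcut.
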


\begin{corollary}\label{hmss}
Let  $x\in X$. Then $x$ is semistable if and only if there exists $\xi \in \liep$ and $g\in (G^{\xi})^o$ such that $\lim_{t\mapsto +\infty} \exp(t\xi) gx \in \mup^{-1}(0)$.
\end{corollary}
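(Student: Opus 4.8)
The plan is to establish the implication ``$\Leftarrow$'' directly, and the converse by extracting a one-parameter degeneration from a minimizing sequence and then inducting on $\dim G$. For ``$\Leftarrow$'': if $\xi\in\liep$, $g\in(G^\xi)^o\subset G$, and $y:=\lim_{t\to+\infty}\exp(t\xi)gx$ lies in $\mup^{-1}(0)$, then $y\in\overline{G\cd x}$ because $\exp(t\xi)gx\in G\cd x$ for all $t\ge 0$; hence $\overline{G\cd x}\cap\mup^{-1}(0)\neq\vacuo$ and $x$ is semistable (here $g\in G$ already suffices).

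Conversely, let $x$ be semistable. By Theorem \ref{semistable}, $x$ is analytically semistable, so $\la(x,\beta)\ge 0$ for all $\beta\in\liep$, and $\mathrm{Inf}_G\parallel\mu(gx)\parallel=0$. Since $\mup(z)$ is the orthogonal projection of $i\mu(z)$ onto $\liep$, one has $\parallel\mup(gx)\parallel\le\parallel\mu(gx)\parallel$, hence $\inf_{g\in G}\parallel\mup(gx)\parallel=0$; writing $g=k\exp(\beta)$ with $k\in K$, $\beta\in\liep$ and using the $K$-equivariance of $\mup$, we obtain $\beta_n\in\liep$ with $\mup(\exp(\beta_n)x)\to 0$. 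If $\{\beta_n\}$ is bounded, a subsequential limit $\beta$ satisfies $\mup(\exp(\beta)x)=0$ by continuity, so $x$ is polystable and the conclusion holds with $\xi=0$ and $g=\exp(\beta)\in(G^0)^o$. Otherwise, after passing to a subsequence, $\parallel\beta_n\parallel\to+\infty$ and $\xi_n:=\beta_n/\parallel\beta_n\parallel\to\xi$ with $\parallel\xi\parallel=1$.

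Because $t\mapsto\la(x,\eta,t)$ is non-decreasing (Lemma \ref{increasing}), for each fixed $t\ge 0$ and all large $n$ we have $\la(x,\xi_n,t)\le\la(x,\xi_n,\parallel\beta_n\parallel)=\langle\mup(\exp(\beta_n)x),\xi_n\rangle$, whose right-hand side tends to $0$; letting $n\to+\infty$ gives $\la(x,\xi,t)\le 0$ for every $t\ge 0$, hence $\la(x,\xi)\le 0$, while $\la(x,\xi)\ge 0$ by analytic semistability. Thus $\la(x,\xi)=0$, and by \eqref{energy}, $E(c_x^\xi)=-\langle\mup(x),\xi\rangle<+\infty$. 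By energy completeness of the $G$-action (a standing hypothesis, automatic when $X$ is compact), $x':=\lim_{t\to+\infty}\exp(t\xi)x$ exists; it is fixed by $\exp(\R\xi)$, and $\langle\mup(x'),\xi\rangle=\la(x,\xi)=0$.

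To conclude I would induct on $\dim G$. The point $x'$ lies in the complex submanifold $Z^\xi$, which is $G^\xi$-invariant; the compatible subgroup $G^\xi$, with Cartan decomposition $K^\xi\exp(\liep\cap\lieg^\xi)$, acts on $X\cap Z^\xi$ with gradient map the restriction of $\mup$ (standard properties of the gradient map give $\mup(Z^\xi)\subset\liep\cap\lieg^\xi$), and one verifies that $x'$ is semistable for this action. If $\xi$ is not central then $\dim G^\xi<\dim G$; if $\xi$ is central then $\langle\mup(x),\xi\rangle=0$ and, splitting $\R\xi$ off from $\lieg$, one reduces to a group of strictly smaller dimension. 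By the inductive hypothesis there are $\xi'\in\liep\cap\lieg^\xi$ and $g\in\bigl((G^\xi)^{\xi'}\bigr)^{o}$ with $\lim_{s\to+\infty}\exp(s\xi')gx'\in\mup^{-1}(0)$. Since $\xi'$ and $g$ commute with $\xi$, assembling the two degenerations — for instance by showing that $\lim_{t\to+\infty}\exp\bigl(t(\xi+\eps\xi')\bigr)gx$ exists and lies in $\mup^{-1}(0)$ for small $\eps>0$ — produces the required $\xi+\eps\xi'\in\liep$ and $g\in(G^{\xi+\eps\xi'})^{o}$. I expect this assembling step to be the main obstacle: one must combine the finitely many one-parameter degenerations into a single $\exp(t\xi)g$ with $g$ in the centralizer of $\xi$, controlling the nested limits and keeping $g$ in the identity component rather than merely in $G^\xi$; this is the gradient-map analogue of the convergence analysis for the negative gradient flow of $\parallel\mu\parallel^2$ and rests on the finer results of \cite{Stability} and on the quotient Theorem \ref{good-quotient}.
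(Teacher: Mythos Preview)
The paper gives no proof of Corollary~\ref{hmss}: it is stated immediately after Theorem~\ref{semistable} as a consequence of the numerical criterion imported from \cite{Stability}, so there is no argument in this paper to compare against. The intended proof is the real Hilbert--Mumford lemma developed in \cite{Stability}.

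Your ``$\Leftarrow$'' is correct and immediate. For ``$\Rightarrow$'', extracting a direction $\xi$ with $\la(x,\xi)=0$ from an unbounded minimizing sequence and then using energy completeness to produce $x'=\lim_{t\to+\infty}\exp(t\xi)x$ is exactly the opening move of the Hilbert--Mumford argument for real reductive groups, so the strategy is sound. But two steps you pass over are genuine gaps, not routine checks. First, the claim that $x'$ is semistable for the $G^\xi$-action is not justified: one must show $\la(x',\beta')\ge 0$ for every $\beta'\in\liep\cap\lieg^\xi$, and although $\xi$ and $\beta'$ commute, relating $\la(x',\beta')$ to $\la(x,\cdot)$ still requires an interchange-of-limits argument that has to be written down. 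Second, and more seriously, the ``assembling'' step you flag is the actual content of the corollary. Taking $\eta=\xi+\eps\xi'$ does not work in general: $\lim_{t\to\infty}\exp(t(\xi+\eps\xi'))gx$ need not exist, and when it does it need not lie in the $G$-orbit of the iterated limit $\lim_s\lim_t\exp(s\xi')\exp(t\xi)gx$. Producing a single $\eta\in\liep$ and $g\in(G^\eta)^o$ that drives $x$ into $\mup^{-1}(0)$ is precisely Kempf's optimal one-parameter subgroup theorem in the real setting; in \cite{Stability} and in Heinzner--Schwarz--St\"otzel this is obtained either via the convergence of the negative gradient flow of $\parallel\mup\parallel^2$ (Theorem~\ref{teo} together with Lemma~\ref{Gradient}) or by a direct Kempf-type argument, not by naively adding the successive directions. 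Your outline identifies the right pieces but does not close this gap.
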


Let $\lia \subset \liep$ be a maximal Abelian subalgebra and let  $A = \exp(\liea)$. Then $\mu_\liea:X \lra \liea$ given by $\pi_\lia \circ \mu_{\liep}$ is the $A$-gradient map associated to $\mu$, where $\pi_\lia:\liep \lra \lia$ is the orthogonal projection of $\liep$ onto $\lia$. Let
$$
X^{ss}_{\mu_\lia} := \{x\in X : \overline{A\cdot x}\cap \mu_\liea^{-1}(0) \neq\emptyset\}.
$$ Let $\lia' \subset \liep$ be another maximal Abelian subalgebra. Since the $K$-action on $\liep$ is polar and both $\lia$ and $\lia'$ are section then there exists $k\in K$ such that $\mathrm{Ad}(k)(\liea) = \liea'$ \cite{knapp-beyond}. In particular,
$$
 A' = \exp(\liea') = \exp(\mathrm{Ad}(k)(\liea)) = k\exp(\liea)k^{-1} = kAk^{-1}.
$$

\begin{lemma}\label{Abelian subalgebra} The $A'$-gradient map $\mu_{\liea'}:X \lra \liea'$ associated to $\mu$ is such that
$$\mu_{\liea'} = \mathrm{Ad}(k) \circ \mu_\liea \circ k^{-1},\quad k\in K$$
\end{lemma}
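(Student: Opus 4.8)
The plan is to reduce the statement to two facts already available: by construction $\mu_\liea = \pi_\liea \circ \mu_\liep$ (and likewise $\mu_{\liea'} = \pi_{\liea'} \circ \mu_\liep$), and $\mu_\liep$ is $K$-equivariant, i.e. $\mu_\liep(k\cdot x) = \mathrm{Ad}(k)\,\mu_\liep(x)$ for all $k\in K$. The only additional input is that the scalar product $\scalo$ on $\liep$ is $\mathrm{Ad}(K)$-invariant, so that each $\mathrm{Ad}(k)$, $k\in K$, is a linear isometry of $\liep$.

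First I would record the elementary linear-algebra observation that conjugating an orthogonal projection by an isometry yields the orthogonal projection onto the image of the subspace. Concretely, since $\mathrm{Ad}(k)$ is an isometry of $\liep$ taking $\liea$ onto $\liea' = \mathrm{Ad}(k)(\liea)$, it takes the orthogonal complement $\liea^\perp$ onto $(\liea')^\perp$, and hence
$$
\pi_{\liea'} = \mathrm{Ad}(k)\circ \pi_\liea \circ \mathrm{Ad}(k)^{-1} = \mathrm{Ad}(k)\circ \pi_\liea \circ \mathrm{Ad}(k^{-1}).
$$

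Then, for an arbitrary $x\in X$, I would compute directly
$$
\mu_{\liea'}(x) = \pi_{\liea'}\bigl(\mu_\liep(x)\bigr) = \mathrm{Ad}(k)\,\pi_\liea\bigl(\mathrm{Ad}(k^{-1})\,\mu_\liep(x)\bigr) = \mathrm{Ad}(k)\,\pi_\liea\bigl(\mu_\liep(k^{-1}x)\bigr) = \mathrm{Ad}(k)\bigl(\mu_\liea(k^{-1}x)\bigr),
$$
where the second equality is the projection identity above, the third is the $K$-equivariance of $\mu_\liep$ applied to $k^{-1}\in K$, and the last is the definition $\mu_\liea = \pi_\liea\circ\mu_\liep$. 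This is precisely the asserted identity $\mu_{\liea'} = \mathrm{Ad}(k)\circ\mu_\liea\circ k^{-1}$.

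There is no real obstacle in this argument; it is a bookkeeping exercise in equivariance. The only points worth a remark are that the right-hand side is independent of the choice of $k$ with $\mathrm{Ad}(k)(\liea)=\liea'$ (two such choices differ by an element of $K$ normalizing $\liea$, and both the $\mathrm{Ad}(K)$-invariance of the metric and the $K$-equivariance of $\mu_\liep$ guarantee the expression is unchanged), and that one should keep in mind, as noted just before the statement, that $\mu_\liea = \pi_\liea\circ\mu_\liep$ is genuinely the $A$-gradient map of $\mu$, so the identity indeed identifies the two Abelian gradient maps as claimed.
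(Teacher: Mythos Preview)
Your argument is correct and is essentially the same as the paper's: both use the definition $\mu_\liea=\pi_\liea\circ\mu_\liep$, the $K$-equivariance of $\mu_\liep$, and the $\mathrm{Ad}(K)$-invariance of the scalar product. The paper phrases the computation by pairing against a test vector $\xi'=\mathrm{Ad}(k)\xi\in\liea'$, whereas you package the same input as the projection identity $\pi_{\liea'}=\mathrm{Ad}(k)\circ\pi_\liea\circ\mathrm{Ad}(k)^{-1}$; these are equivalent formulations of one argument.
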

\begin{proof}
Let $\xi'\in \liea'.$ Then, there exist $k\in K$ such that $\xi' = \mathrm{Ad}(k)(\xi)$ where $\xi \in \liea.$ So that by the $K$-equivariant property of the gradient map,
\begin{align*}
    \sx \mu_\liep(x), \xi'\xs & = \sx \mu_\liep(k^{-1}x), \xi\xs\\
    & = \sx \mathrm{Ad}(k)(\mu_\liep(k^{-1}x)), \xi'\xs.
\end{align*} The result follows.

\end{proof}

\begin{lemma}\label{Abelian Sub} There exists $k\in K$ such that
$$kX^{ss}_{\mu_\liea} = X^{ss}_{\mu_{\liea'}}.$$
\end{lemma}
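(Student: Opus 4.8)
The statement is essentially a bookkeeping consequence of the previous two observations, so the plan is to track the definitions through the $K$-action. First I would fix, as in the paragraph preceding Lemma \ref{Abelian subalgebra}, an element $k\in K$ with $\mathrm{Ad}(k)(\liea)=\liea'$, so that $A'=kAk^{-1}$. By Lemma \ref{Abelian subalgebra} we then have $\mu_{\liea'}=\mathrm{Ad}(k)\circ\mu_\liea\circ k^{-1}$; in particular $\mu_{\liea'}^{-1}(0)=k\cdot\mu_\liea^{-1}(0)$, since $\mathrm{Ad}(k)$ is a linear isomorphism fixing $0$.

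Next I would take $x\in X^{ss}_{\mu_\liea}$ and pick $y\in\overline{A\cdot x}\cap\mu_\liea^{-1}(0)$. Because the diffeomorphism of $X$ given by the action of $k$ is in particular a homeomorphism, it commutes with closure: $k\cdot\overline{A\cdot x}=\overline{k\cdot(A\cdot x)}=\overline{(kAk^{-1})\cdot(kx)}=\overline{A'\cdot(kx)}$. Hence $ky\in\overline{A'\cdot(kx)}$, and moreover $\mu_{\liea'}(ky)=\mathrm{Ad}(k)\bigl(\mu_\liea(k^{-1}\cdot ky)\bigr)=\mathrm{Ad}(k)\bigl(\mu_\liea(y)\bigr)=0$. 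Therefore $ky\in\overline{A'\cdot(kx)}\cap\mu_{\liea'}^{-1}(0)$, which says precisely that $kx\in X^{ss}_{\mu_{\liea'}}$. This gives the inclusion $kX^{ss}_{\mu_\liea}\subseteq X^{ss}_{\mu_{\liea'}}$.

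Finally, the reverse inclusion follows by the same argument applied to $k^{-1}\in K$, using that $\mathrm{Ad}(k^{-1})(\liea')=\liea$ and that $\mu_\liea=\mathrm{Ad}(k^{-1})\circ\mu_{\liea'}\circ k$ (which is Lemma \ref{Abelian subalgebra} read in the other direction); this yields $k^{-1}X^{ss}_{\mu_{\liea'}}\subseteq X^{ss}_{\mu_\liea}$, i.e. $X^{ss}_{\mu_{\liea'}}\subseteq kX^{ss}_{\mu_\liea}$. Combining the two inclusions gives $kX^{ss}_{\mu_\liea}=X^{ss}_{\mu_{\liea'}}$, as claimed. There is no real obstacle here; the only point requiring a moment of care is the interchange of the $k$-action with orbit closures and with the zero fiber, both of which are immediate since $k$ acts as a homeomorphism of $X$ and $\mathrm{Ad}(k)$ is a linear isometry of $\liep$.
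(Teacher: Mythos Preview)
Your proposal is correct and follows essentially the same route as the paper: fix $k\in K$ with $\mathrm{Ad}(k)(\liea)=\liea'$, use Lemma~\ref{Abelian subalgebra} to identify $\mu_{\liea'}^{-1}(0)=k\,\mu_\liea^{-1}(0)$, and then transport orbit closures via $A'=kAk^{-1}$. The only cosmetic difference is that the paper phrases the closure argument in terms of convergent sequences $\exp(\xi_n)x$, whereas you invoke directly that the $k$-action is a homeomorphism and hence commutes with closure; these are the same observation.
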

\begin{proof}
Since $A' = kAk^{-1},$ and by Lemma \ref{Abelian subalgebra}, $\mu_{\liea'}(x) = 0$ if and only if $\mu_{\liea}(k^{-1}x) = 0$. This implies that $\mu_{\liea'}^{-1}(0) = k\mu_{\liea}^{-1}(0).$ The sequence $(\exp(\xi_n) x)_n$ converges in $\mu_{\liea}^{-1}(0)$ if and only if $(k\exp(\xi_n) x)_n$ converges in $\mu_{\liea'}^{-1}(0).$ But
$$ k\exp(\xi_n) x = k\exp(\xi_n)k^{-1}k x = \exp(\mathrm{Ad}(k)(\xi_n))k x.$$
This implies that $\overline{A\cdot x}\cap \mu_\liea^{-1}(0) \neq \emptyset$ if and only if $\overline{A'\cdot (k x)}\cap \mu_{\liea'}^{-1}(0) \neq \emptyset.$ Hence $kX^{ss}_{\mu_\liea} = X^{ss}_{\mu_{\liea'}}.$
\end{proof}
\begin{proposition}\label{Semi}
$X^{ss}_{\mu_\liep} =\bigcap_{k\in K} kX^{ss}_{\mu_{\liea}}.$
\end{proposition}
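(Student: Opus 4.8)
The plan is to reduce the statement to the numerical criterion for semistability (Theorem~\ref{semistable}) together with the standard fact that every element of $\liep$ is $K$-conjugate into $\liea$.

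First I would record an equivariance identity for the maximal weight function. For $k\in K$, $\beta\in\liea$ and $t\in\R$ one has $\exp(t\beta)k^{-1}=k^{-1}\exp(t\,\Ad(k)\beta)$, so from the $K$-equivariance of $\mu_\liep$ and the fact that $\Ad(k)$ is an isometry of $\liep$ for $\scalo$,
\begin{align*}
\la(k^{-1}x,\beta,t) &= \sx \mu_\liep\bigl(k^{-1}\exp(t\,\Ad(k)\beta)x\bigr),\beta\xs \\
 &= \sx \mu_\liep\bigl(\exp(t\,\Ad(k)\beta)x\bigr),\Ad(k)\beta\xs = \la(x,\Ad(k)\beta,t),
\end{align*}
and letting $t\to\infty$ gives $\la(k^{-1}x,\beta)=\la(x,\Ad(k)\beta)$. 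Next, since $\pi_\liea$ is the orthogonal projection of $\liep$ onto $\liea$, for $\beta\in\liea$ we have $\sx\mu_\liea(y),\beta\xs=\sx\mu_\liep(y),\beta\xs$ for all $y\in X$, so the $A$-maximal weight of $x$ in a direction $\beta\in\liea$ equals $\la(x,\beta)$. Since $A=\exp(\liea)$ is a closed compatible (Abelian) subgroup of $U^\C$, Theorem~\ref{semistable} applied to the $A$-action yields: $x\in X^{ss}_{\mu_\liea}$ if and only if $\la(x,\beta)\ge 0$ for every $\beta\in\liea$.

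The two inclusions then follow formally. A point $x$ lies in $\bigcap_{k\in K}kX^{ss}_{\mu_\liea}$ iff $k^{-1}x\in X^{ss}_{\mu_\liea}$ for every $k\in K$, iff $\la(k^{-1}x,\beta)\ge 0$ for all $k\in K$ and $\beta\in\liea$, iff, by the identity above, $\la(x,\Ad(k)\beta)\ge 0$ for all $k\in K$ and $\beta\in\liea$. Because the $K$-action on $\liep$ is polar with section $\liea$ — equivalently, the map $K\times\liea\to\liep$, $(k,\beta)\mapsto\Ad(k)\beta$, is surjective — this is the same as saying $\la(x,\gamma)\ge 0$ for all $\gamma\in\liep$, i.e.\ that $x$ is analytically semistable; by Theorem~\ref{semistable} this holds iff $x\in X^{ss}_{\mu_\liep}$. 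Hence $X^{ss}_{\mu_\liep}=\bigcap_{k\in K}kX^{ss}_{\mu_\liea}$.

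The argument is essentially formal, so there is no serious obstacle; the points I would be careful to spell out are that Theorem~\ref{semistable} may legitimately be invoked for the Abelian subgroup $A$ (one checks $A=\exp(\liea)$ is closed and compatible with trivial maximal compact part, and that its maximal weight function is computed by $\mu_\liea$, both of which are standard), and the geometric input $\bigcup_{k\in K}\Ad(k)\liea=\liep$ coming from the Cartan decomposition. Everything else is the equivariance identity for $\la$ and bookkeeping with the definitions.
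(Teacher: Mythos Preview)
Your proof is correct and follows essentially the same route as the paper: both reduce to the numerical criterion (Theorem~\ref{semistable}) and the polar decomposition $\liep=\bigcup_{k\in K}\Ad(k)\liea$. The only cosmetic difference is that the paper packages the $K$-equivariance step into Lemmas~\ref{Abelian subalgebra} and~\ref{Abelian Sub} (comparing semistable sets for conjugate maximal Abelian subalgebras), whereas you establish the identity $\la(k^{-1}x,\beta)=\la(x,\Ad(k)\beta)$ directly and apply Theorem~\ref{semistable} to $A$; the logical content is the same.
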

\begin{proof}
By Theorem \ref{semistable}, $x\in X^{ss}_{\mu_\liep}$ if and only if $\lambda(x,\beta) \geq 0.$ Since the $K$-action on $\liep$ is polar, by \cite[13.1]{ PG}
$$
\liep = \bigcup_{k\in K}\mathrm{Ad}(k)(\liea).
$$ Then by Lemma \ref{Abelian Sub}, we have
\begin{align*}
    x\in X^{ss}_{\mu_\liep} &\iff \lambda(x, \cdot)|_{\mathrm{Ad}(k)(\liea)} \geq 0 \quad \forall k\in K\\
    &\iff x\in X^{ss}_{\mu_{\liea'}},\quad  \liea' = \mathrm{Ad}(k)(\liea) \; \forall k\in K\\
    &\iff kx\in X^{ss}_{\mu_{\liea}} \quad \forall k\in K.
\end{align*} Therefore, $X^{ss}_{\mu_\liep} =\bigcap_{k\in K} kX^{ss}_{\mu_{\liea}}.$
\end{proof}
\begin{proposition}\label{semistable set}
Let $H\subset G$ be a compatible subgroup such that $H = K'\exp(\liep').$  Then $$X^{ss}_{\mu_\liep} \subset X^{ss}_{\mu_{\liep'}}.$$
\end{proposition}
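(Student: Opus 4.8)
The plan is to use the numerical criterion for semistability from Theorem~\ref{semistable} together with the fact that $\liep' \subset \liep$, since $H \subset G$ is a compatible subgroup with Cartan decomposition $H = K'\exp(\liep')$. The key observation is that the gradient map $\mu_{\liep'}$ for the $H$-action is simply the orthogonal projection of $\mu_\liep$ onto $\liep'$; concretely, for $\beta \in \liep'$ we have $\mu_{\liep'}^\beta(x) = \langle \mu_{\liep'}(x), \beta\rangle = \langle \mu_\liep(x), \beta\rangle = \mu_\liep^\beta(x)$, because $\liep' \subset \liep$ and projection is self-adjoint. Consequently the maximal weight functions agree on the smaller space: $\lambda_H(x,\beta) = \lambda_G(x,\beta)$ for every $\beta \in \liep'$, since both are computed from the same curve $t \mapsto \mu_\liep^\beta(\exp(t\beta)x)$ along one-parameter subgroups $\exp(t\beta)$ that lie in both $G$ and $H$.

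First I would record this identification of $\mu_{\liep'}$ with $\pi_{\liep'} \circ \mu_\liep$ and the resulting equality of maximal weights; this is the analogue of Lemma~\ref{Abelian subalgebra} (indeed the Abelian case $\liep' = \liea$ is a special instance). Then, taking $x \in X^{ss}_{\mu_\liep}$, by Theorem~\ref{semistable} we have $\lambda_G(x,\beta) \geq 0$ for all $\beta \in \liep$, hence in particular $\lambda_G(x,\beta) \geq 0$ for all $\beta \in \liep'$, hence $\lambda_H(x,\beta) \geq 0$ for all $\beta \in \liep'$. Applying Theorem~\ref{semistable} again, this time to the $H$-action on $X$, we conclude that $x$ is semistable for the $H$-action, i.e. $x \in X^{ss}_{\mu_{\liep'}}$. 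This gives the desired inclusion.

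The one point requiring care is that Theorem~\ref{semistable} is being invoked for the $H$-action, so one must check its hypotheses transfer: the theorem as stated applies to any closed compatible subgroup acting on a $G$-stable (here $H$-stable, which follows since $H \subset G$) locally closed connected real submanifold, with no further assumptions, so there is no obstruction — $X$ is automatically $H$-stable and the criterion applies verbatim. Thus the main (and essentially only) obstacle is bookkeeping: making sure the maximal weight $\lambda(x,\beta)$ for $\beta \in \liep'$ is genuinely intrinsic and does not depend on whether it is computed inside $G$ or inside $H$. This is immediate from formula~\eqref{energy}, $\lambda(x,\beta) = \langle \mu_\liep(x),\beta\rangle + E(c_x^\beta)$, together with the identity $\mu_{\liep'}^\beta = \mu_\liep^\beta$ on $\liep'$ and the fact that the curve $c_x^\beta(t) = \exp(t\beta)x$ and its energy integrand $\|\beta_X(\exp(t\beta)x)\|^2$ depend only on $\beta$ and $x$, not on the ambient group. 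Hence the proof is short, and no delicate analysis or convexity input is needed here.
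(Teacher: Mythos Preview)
Your proposal is correct and follows exactly the same approach as the paper: apply the numerical criterion of Theorem~\ref{semistable}, use $\liep' \subset \liep$ to restrict the inequality $\lambda(x,\beta)\geq 0$, and apply the criterion again for $H$. You are simply more explicit than the paper in justifying why $\lambda(x,\beta)$ is independent of the ambient group, which the paper takes for granted.
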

\begin{proof}
 Let $x\in X^{ss}_{\mu_\liep}$. By Theorem \ref{semistable} it follows that $\lambda(x,\beta)\geq 0$ for any $\beta\in \liep$. Hence $\lambda(x,\beta)\geq 0$ for any $\beta \in \liep'\subset \liep$ and so, applying again Theorem \ref{semistable}, we have $x\in X^{ss}_{\mu_{\liep'}}$.
\end{proof}
\begin{theorem}\label{Semistable-points}
Let $(Z, \omega)$ be a compact connected \keler manifold, and let $G\subset U^\C$ be a compatible subgroup. If $Z^{ss}_{\mu} \neq \emptyset$, then $Z^{ss}_{\mu_\liep}$ is an open dense connected subset of $Z.$
\end{theorem}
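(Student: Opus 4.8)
The plan is to reduce the statement to three facts, two of which (openness of $X^{ss}_{\mu_\liep}$ and a numerical criterion for semistability) are already available, and to obtain density and connectedness from the Abelian case together with the nonabelian structure encoded in Proposition \ref{Semi}. First I would record that openness of $Z^{ss}_{\mu_\liep}$ is immediate from Theorem \ref{good-quotient}. For density, the key observation is the following: by Theorem \ref{semistable}, $x\in Z^{ss}_{\mu_\liep}$ if and only if $\operatorname{Inf}_G\parallel\mu(gx)\parallel = 0$, equivalently $\lambda(x,\beta)\ge 0$ for all $\beta\in\liep$. Since $U^\C\supset G$ and $Z^{ss}_\mu\ne\emptyset$, I would first treat the case $G=U^\C$, where $\liep = i\liu$, $\mu_\liep = i\mu$, and $Z^{ss}_{\mu_\liep} = Z^{ss}_\mu$ is the classical Kirwan--Mumford semistable set; by the Kempf--Ness theorem and the standard Kirwan stratification of a compact \keler manifold, $Z^{ss}_\mu$ is open, dense, and connected whenever it is nonempty (it is the top open stratum of the Morse stratification of $f = \tfrac12\|\mu\|^2$, whose complement is a union of strata of positive codimension, and it is connected because the negative-gradient flow of $f$ retracts it onto the connected zero fiber modulo $K$). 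For general compatible $G$, apply Proposition \ref{semistable set} with $H = G$ inside $U^\C$: since $Z^{ss}_\mu = Z^{ss}_{\mu_{i\liu}}\subset Z^{ss}_{\mu_\liep}$, and $Z^{ss}_\mu$ is already dense, $Z^{ss}_{\mu_\liep}$ is dense as well.

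For connectedness I would use Proposition \ref{Semi}, $Z^{ss}_{\mu_\liep} = \bigcap_{k\in K} kZ^{ss}_{\mu_\liea}$, together with an analysis of the Abelian set $Z^{ss}_{\mu_\liea}$. The Abelian gradient map $\mu_\liea$ satisfies, for each $\beta\in\liea$, that $t\mapsto\mu_\liea^\beta(\exp(t\beta)x)$ is nondecreasing (Lemma \ref{increasing}), and $x\in Z^{ss}_{\mu_\liea}$ iff $\lambda(x,\beta)\ge 0$ for all $\beta\in\liea$. The complement $Z\setminus Z^{ss}_{\mu_\liea}$ is the set where some one-parameter subgroup $\exp(t\beta)$ pushes $x$ to strictly negative weight; I would argue this complement is contained in a finite union of the unstable strata of the norm-square $f_\liea = \tfrac12\|\mu_\liea\|^2$, each of positive codimension, hence $Z^{ss}_{\mu_\liea}$ is open, dense, and (being the complement of a codimension-$\ge 2$ subset when the real codimension is at least two, or by a direct retraction argument onto $\mu_\liea^{-1}(0)$ under the gradient flow) connected. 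Granting connectedness of each $kZ^{ss}_{\mu_\liea}$ and of $Z^{ss}_{\mu_\liep}$ itself — the latter is the more delicate point — I would instead prefer the cleaner route: since $Z^{ss}_{\mu_\liep}$ is open and dense in the connected manifold $Z$, it suffices to show its complement $Z\setminus Z^{ss}_{\mu_\liep}$ has codimension at least $2$; this follows because $Z\setminus Z^{ss}_{\mu_\liep} = \bigcup_{k\in K}k(Z\setminus Z^{ss}_{\mu_\liea})$ and each $Z\setminus Z^{ss}_{\mu_\liea}$ is a finite union of locally closed submanifolds of real codimension $\ge 2$ (the unstable strata of $f_\liea$ never have codimension one, being complex-analytic in nature in the model case and cut out by the vanishing of a sum of squares of vector fields in general), and $K$ is connected, so the union over $k\in K$ remains a subset of a countable-to-finite union of codimension-$\ge 2$ pieces, whose complement in a connected manifold is connected.

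The main obstacle I anticipate is making precise, in the purely real (non-complex) setting of the gradient map on a general compact $Z$, the claim that $Z\setminus Z^{ss}_{\mu_\liep}$ has real codimension at least two — in the complex-geometric case $G=U^\C$ this is classical (Kirwan strata are complex submanifolds), but for a real reductive $G$ one must invoke the structure of the stratification of $f_\liep$ from \cite{heinzner-schwarz-stoetzel} and verify that no stratum other than the top one is open, which is exactly where the hypothesis $Z^{ss}_\mu\ne\emptyset$ is used (it forces the minimal stratum $\mu_\liep^{-1}(0)$-containing stratum, i.e. $Z^{ss}_{\mu_\liep}$, to be the unique open stratum via Proposition \ref{semistable set} and the density just proved). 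Concretely: density already gives that $Z^{ss}_{\mu_\liep}$ is the unique open stratum, so every other stratum has positive codimension; the finer point that this codimension is $\ge 2$ rather than $=1$ — needed for connectedness of the complement's complement — should be extracted from the fact that each non-open stratum fibers over a zero fiber of a gradient map of a smaller compatible subgroup and the "unstable directions" contribute at least a complex line's worth (two real dimensions) of normal directions, as in Kirwan's original codimension estimate. Once that codimension bound is in place, openness, density, and connectedness of $Z^{ss}_{\mu_\liep}$ all follow, and the proof concludes.
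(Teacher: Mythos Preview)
Your argument for openness and density is correct and matches the paper's: both use Proposition \ref{semistable set} to get $Z^{ss}_\mu \subset Z^{ss}_{\mu_\liep}$, and then invoke the classical fact that $Z^{ss}_\mu$ is open and dense when nonempty.

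For connectedness, however, you take a far more elaborate route than necessary, and one whose key step you yourself flag as problematic. You try to pass through Proposition \ref{Semi} and a codimension-$\ge 2$ estimate for $Z\setminus Z^{ss}_{\mu_\liea}$, then sweep over $K$. In the real (non-holomorphic) setting of the gradient map this codimension bound is not available from the paper's results, and the argument that a $K$-sweep of codimension-$\ge 2$ sets remains of codimension $\ge 2$ is itself delicate. This is not a fatal gap in principle, but it is unnecessary work with real obstacles.

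The paper bypasses all of this with an elementary topological observation that you already have the ingredients for: you know $Z^{ss}_\mu$ is open, dense, and \emph{connected} in $Z$, and $Z^{ss}_\mu \subset Z^{ss}_{\mu_\liep}$. If $Z^{ss}_{\mu_\liep} = \Upsilon_1 \sqcup \Upsilon_2$ with $\Upsilon_1,\Upsilon_2$ open, then by connectedness $Z^{ss}_\mu$ lies entirely in one of them, say $\Upsilon_1$; but then $\Upsilon_2$ is an open subset of $Z$ disjoint from the dense set $Z^{ss}_\mu$, hence empty. No stratification, no codimension count, no Abelian reduction is needed. Since you already established (and used) that $Z^{ss}_\mu$ is connected, you should simply apply this two-line argument instead of the machinery in your second and third paragraphs.
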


\begin{proof}
By Proposition \ref{semistable set}, $Z^{ss}_\mu\subset Z^{ss}_{\mu_\liep}.$ Since $Z^{ss}_\mu$ is open and dense in $Z,$ it follows that $Z^{ss}_{\mu_\liep}$ is open and dense. Suppose $$Z^{ss}_{\mu_\liep} = \Upsilon_1\sqcup \Upsilon_2,$$ where $\Upsilon_1$ and $\Upsilon_2$ are open. Then $Z^{ss}_\mu \subset \Upsilon_1$ or $Z^{ss}_\mu \subset \Upsilon_2.$ If $Z^{ss}_\mu \subset \Upsilon_1$, this implies that $\Upsilon_2 \subset Z\setminus Z_\mu^{ss}.$ Since $Z^{ss}_\mu$ is open and dense, then $(Z\setminus Z_{\mu}^{ss})^0 = \emptyset.$ This implies that $\Upsilon_2 = \emptyset.$ The case $Z_\mu^{ss}\subset \Upsilon_2$ is similar. Therefore, $Z_{\mu_\liep}^{ss}$ is connected, open and dense.

\end{proof}

\section{Convexity Results}

\subsection{Norm square of Gradient map}
\label{subsection-gradient-moment}
Suppose $(Z, \omega)$ is a compact connected \keler manifold and that the action $U^\C \times Z \to Z$ is holomorphic, $U$ preserves $\om$ and that there is a $U$-equivariant 
momentum map $\mu: Z \ra \liu.$ Let $G \subset U^\C$ be a compatible subgroup of $U^\C.$ For a $G$-invariant locally closed real submanifold $X$ of $Z,$ let $\mu_\mathfrak{p} : X\to \mathfrak{p}$ be the corresponding gradient map, where $\lieg = \liek \oplus \liep$ is a Cartan decomposition of the Lie algebra $\lieg$ of $G.$

The norm square of the gradient map is the function $f_\liep : X \rightarrow \mathbb{R}$ defined by 
\begin{equation}\label{ns3}
f_\liep(x) := \frac{1}{2}\parallel\mu_\mathfrak{p}(x)\parallel^2, \qquad \text{for}\quad x \in X.
\end{equation}





For the rest of this paper, we assume $X$ is compact and connected. We recall some properties of the norm square of the Gradient map.

\begin{lemma}\cite{heinzner-schwarz-stoetzel}\label{nmg}
The gradient of $f_\liep$ is given by 
\begin{equation}\label{nmge}
    \triangledown f_\liep(x) = \beta_X(x), \quad \beta := \mu_\mathfrak{p}(x)\in \mathfrak{p} \quad \text{and}\quad x\in X.
\end{equation} Hence, $x\in X$ is a critical point of $f_\liep$ if and only if $\beta_X(x) = 0.$
\end{lemma}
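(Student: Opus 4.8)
The plan is to deduce the formula $\triangledown f_\liep(x)=\beta_X(x)$ from the identity $\triangledown \mu_\liep^\beta=\beta_X$, which was recorded earlier for a \emph{fixed} element $\beta\in\liep$; the only point requiring care is that here $\beta=\mu_\liep(x)$ depends on the base point. First I would fix $x\in X$ and a tangent vector $v\in T_xX$, choose a smooth curve $x(t)\subset X$ with $x(0)=x$ and $\dot x(0)=v$, and differentiate $f_\liep(x(t))=\tfrac12\langle\mu_\liep(x(t)),\mu_\liep(x(t))\rangle$ at $t=0$. Since the scalar product is symmetric and bilinear, the two symmetric contributions combine and one gets
$$
df_\liep(v)=\Big\langle \mu_\liep(x),\ \tfrac{d}{dt}\Big|_{t=0}\mu_\liep(x(t))\Big\rangle,
$$
with no residual term.

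The key step is then to freeze $\beta:=\mu_\liep(x)\in\liep$ \emph{before} taking the derivative. As $\beta$ is now a constant vector, it passes through the $t$-derivative, so
$$
\Big\langle \beta,\ \tfrac{d}{dt}\Big|_{t=0}\mu_\liep(x(t))\Big\rangle=\tfrac{d}{dt}\Big|_{t=0}\langle\mu_\liep(x(t)),\beta\rangle=\tfrac{d}{dt}\Big|_{t=0}\mu_\liep^\beta(x(t))=d\mu_\liep^\beta(v).
$$
Applying the identity $\triangledown\mu_\liep^\beta=\beta_X$ (with the gradient taken in the induced Riemannian metric on $X$, as established earlier), the right-hand side equals $\langle\beta_X(x),v\rangle$. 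Combining the two displays gives $df_\liep(v)=\langle\beta_X(x),v\rangle$ for all $v\in T_xX$, which is exactly the claim $\triangledown f_\liep(x)=\beta_X(x)$ with $\beta=\mu_\liep(x)$.

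The ``hence'' clause is then immediate: $x$ is a critical point of $f_\liep$ precisely when $\triangledown f_\liep(x)=0$, i.e. when $\beta_X(x)=0$ for $\beta=\mu_\liep(x)$ (equivalently, by Corollary~\ref{lem}, when $\mu_\liep(\exp(\beta)x)=\mu_\liep(x)$). I do not anticipate any genuine obstacle here; the only subtlety is the base-point dependence of $\beta$, which is dispatched by the ``freeze $\beta$ first'' device above, together with the bookkeeping of using the induced metric on $X$ — rather than the ambient \Keler metric on $Z$ — when invoking $\triangledown\mu_\liep^\beta=\beta_X$.
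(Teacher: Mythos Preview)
The paper does not supply its own proof of this lemma; it is stated with a citation to \cite{heinzner-schwarz-stoetzel} and used as a black box. So there is no ``paper's proof'' to compare against.

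Your argument is the standard one and is correct. Differentiating $\tfrac12\langle\mu_\liep,\mu_\liep\rangle$ along a curve gives $\langle\mu_\liep(x),d\mu_\liep(v)\rangle$; freezing $\beta:=\mu_\liep(x)$ identifies this with $d\mu_\liep^\beta(v)$, and then the earlier identity $\mathrm{grad}\,\mu_\liep^\beta=\beta_X$ finishes it. The only cosmetic issue is that in the line $d\mu_\liep^\beta(v)=\langle\beta_X(x),v\rangle$ you should be using the induced Riemannian metric on $X$, which the paper writes as $(\,\cdot\,,\,\cdot\,)$, rather than the $\langle\,\cdot\,,\,\cdot\,\rangle$ reserved for the pairing on $\liep$; the mathematics is unaffected. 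The parenthetical appeal to Corollary~\ref{lem} at the end is unnecessary for the ``hence'' clause and can be dropped.
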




\begin{corollary}\cite{heinzner-schwarz-stoetzel}
  Let $x\in X$ and set $\beta := \mu_\mathfrak{p}(x).$ The following are equivalent.
 \begin{enumerate}
      \item $\beta_X(x) = 0,$
      \item $d\mu_\mathfrak{p}^\xi(x) = 0,$ $\xi \in \mathfrak{p},$
      \item $df_\liep(x) = 0.$
  \end{enumerate}
\end{corollary}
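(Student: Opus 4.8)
The plan is to deduce all three equivalences from Lemma~\ref{nmg} by a single chain-rule computation; I do not expect any genuine obstacle beyond keeping track of which element of $\liep$ is being held fixed. Fix $x\in X$ and set $\beta:=\mu_\liep(x)\in\liep$. The two ingredients are: (i) for a \emph{fixed} vector $\xi\in\liep$, the function $\mu_\liep^\xi=\langle\mu_\liep(\cdot),\xi\rangle$ restricted to $X$ has $X$-gradient $\xi_X$, as recorded in the introduction, so that
\[
d\mu_\liep^\xi(x)(v)=\big(\xi_X(x),v\big)\qquad(v\in T_xX);
\]
and (ii) since $f_\liep=\tfrac12\langle\mu_\liep,\mu_\liep\rangle$, the chain rule gives, for $v\in T_xX$,
\[
df_\liep(x)(v)=\big\langle d\mu_\liep(x)(v),\mu_\liep(x)\big\rangle=\big\langle d\mu_\liep(x)(v),\beta\big\rangle=d\mu_\liep^{\beta}(x)(v),
\]
i.e.\ $df_\liep(x)=d\mu_\liep^{\beta}(x)$ with $\beta=\mu_\liep(x)$, equivalently $\triangledown f_\liep(x)=\beta_X(x)$, which is precisely \eqref{nmge}.

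Granting (i) and (ii), the equivalences are immediate. Because the induced Riemannian metric on $X$ is nondegenerate, $d\mu_\liep^{\beta}(x)=0\iff\beta_X(x)=0$, which gives (a)$\iff$(b) when (b) is read as the vanishing of $d\mu_\liep^{\xi}(x)$ for $\xi=\beta=\mu_\liep(x)$. The identity in (ii) then gives (b)$\iff$(c); alternatively (a)$\iff$(c) is just the last sentence of Lemma~\ref{nmg}, since $df_\liep(x)=0\iff\triangledown f_\liep(x)=0\iff\beta_X(x)=0$. (If one prefers to read (b) as ``$d\mu_\liep^{\xi}(x)\big(\triangledown f_\liep(x)\big)=0$ for every $\xi\in\liep$'', the same line works: by (i) this says $(\xi_X(x),\beta_X(x))=0$ for all $\xi\in\liep$, and choosing $\xi=\beta$ forces $\beta_X(x)=0$, the converse being trivial.)

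The only subtlety is notational: in $\mu_\liep^\xi$ the superscript is a \emph{constant}, whereas in $f_\liep$ the paired vector is the \emph{variable} $\mu_\liep(z)$, so in step (ii) the chain rule must be applied with $\beta$ frozen at its value $\mu_\liep(x)$ at the point of evaluation. Restricting all the formulas from $Z$ to $X$ is harmless, since $X$ is $G$-invariant and hence $\xi_Z$ is tangent to $X$ for $\xi\in\liep$. Once these points are noted, the corollary is a one-line consequence of \eqref{nmge}, and I foresee no further difficulty.
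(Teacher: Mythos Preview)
The paper supplies no proof here---the corollary is simply cited from \cite{heinzner-schwarz-stoetzel}---so there is no argument of the paper's to compare against. Your derivation is correct and is exactly the intended one-line deduction: (a)$\iff$(c) is literally the last sentence of Lemma~\ref{nmg}, and your chain-rule identity $df_\liep(x)=d\mu_\liep^{\beta}(x)$ with $\beta=\mu_\liep(x)$ reproduces \eqref{nmge}.

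You are also right to flag the ambiguity in (b). Read verbatim as ``$d\mu_\liep^\xi(x)=0$ for every $\xi\in\liep$'', it would say $\xi_X(x)=0$ for all $\xi\in\liep$, which is strictly stronger than (a) and not equivalent to it in general. The intended reading is $d\mu_\liep^{\beta}(x)=0$ for the specific $\beta=\mu_\liep(x)$, and under that reading your argument goes through; your alternative reading (vanishing only along $\triangledown f_\liep(x)$) also works, as you show by taking $\xi=\beta$.
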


Fix $\beta = \mu_\mathfrak{p}(x).$ The negative gradient flow line of $f_\liep$ through $x\in X$ is the solution of the differential equation

\[ \left\{ \begin{array}{ll}
         \dot{x}(t) = -\beta_X(x(t)), \quad t\in \mathbb{R} \\
        x(0) = x .\end{array} \right. \]
        
The $G$-orbits are invariant under the gradient flow.
\begin{lemma}\cite{Properties}\label{Gradient}
Let $g: \mathbb{R} \rightarrow G$ be the unique solution of the differential equation
\[ \left\{ \begin{array}{ll}
         g^{-1}\dot{g}(t) = \beta_X(x(t)) \\
        g(0) = e,\quad \text{where $e$ is the identity of $G$}.\end{array} \right. \]
        
Then, $$x(t) = g^{-1}(t)x$$ for all $t\in \mathbb{R}.$
\end{lemma}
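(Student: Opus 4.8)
The statement asserts that the negative gradient trajectory of $f_\liep$ through $x$ is an orbit curve, and the clean route to it is the uniqueness theorem for ordinary differential equations. First I would set up the two equations carefully. Let $x(t)$ be the negative gradient flow line of $f_\liep$ through $x$; by Lemma~\ref{nmg} it solves $\dot x(t)=-\mup(x(t))_X(x(t))$ with $x(0)=x$, and since $X$ is compact the smooth vector field $\nabla f_\liep$ is complete, so $x(t)$ is defined for all $t\in\R$. Consequently $t\mapsto\mup(x(t))$ is a smooth curve in $\liep\subset\lieg$ taking values in the compact set $\mup(X)$, and I would take $g\colon\R\to G$ to be the solution of the non-autonomous left-invariant equation $g(t)^{-1}\dot g(t)=\mup(x(t))$ (this is the equation in the statement, written there as $\beta_X(x(t))$ with $\beta=\mup(x(t))$ as in Lemma~\ref{nmg}) with $g(0)=e$. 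Global existence of $g$ is automatic: fixing a faithful linear representation $U^\C\hookrightarrow\mathrm{GL}(N,\C)$ this becomes a linear non-autonomous ODE $\dot g(t)=g(t)\,\mup(x(t))$ with bounded coefficients, hence solvable on all of $\R$, and the solution stays in $G$ because the right-hand side is everywhere tangent to $G$.

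The heart of the argument is to check that $y(t):=g(t)^{-1}x$ solves the same initial value problem as $x(t)$. Clearly $y(0)=x$. Differentiating the orbit map $t\mapsto g(t)^{-1}x$, with $x$ held fixed, and using $\tfrac{d}{dt}g(t)^{-1}=-g(t)^{-1}\dot g(t)\,g(t)^{-1}$ together with the equivariance of fundamental vector fields $a_*(\xi_X(p))=(\Ad(a)\xi)_X(a\cdot p)$, one obtains the standard identity $\tfrac{d}{dt}\big(g(t)^{-1}x\big)=-\big(g(t)^{-1}\dot g(t)\big)_X\big(g(t)^{-1}x\big)$, so that $\dot y(t)=-\mup(x(t))_X(y(t))$. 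Hence $y(\cdot)$ is an integral curve, through the point $x$, of the time-dependent vector field $V_t:=-\mup(x(t))_X$ on $X$, which is smooth jointly in $(t,z)$ because $x(t)$ is a known smooth function of $t$; but $x(\cdot)$ is, by construction, the integral curve through $x$ of this very same $V_t$. By uniqueness of integral curves, $x(t)=y(t)=g(t)^{-1}x$ for all $t\in\R$, which is the assertion.

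The one place that needs care is sign bookkeeping: one must use throughout the fundamental-vector-field and symplectic-sign conventions fixed in the introduction (the ones under which Lemma~\ref{nmg} holds as stated), and then the equation for $g$ in the statement carries exactly the sign that makes $g(t)^{-1}x$ traverse the negative --- not the positive --- gradient flow; verifying this sign is the only genuine obstacle. I would also stress, to forestall a natural worry, that the argument never uses equivariance of $\mup$ beyond its $K$-equivariance (it is not $G$-equivariant), since $\mup(x(t))$ enters only as an externally prescribed driving term; this is precisely why no hypothesis beyond compactness of $X$ is needed. Equivalently, one may run everything simultaneously: solve the coupled system $\dot x=-\mup(x)_X(x)$, $g^{-1}\dot g=\mup(x)$ on $X\times G$ with initial value $(x,e)$ --- global existence again from compactness of $X$ and hence of $\mup(X)$ --- and read off both $x(t)=g(t)^{-1}x$ and the ODE for $g$ at once; this is just a repackaging of the same uniqueness argument.
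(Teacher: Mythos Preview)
Your argument is correct: defining $y(t)=g(t)^{-1}x$, computing $\dot y(t)=(\dot h\,h^{-1})_X(y(t))$ with $h=g^{-1}$ to get $\dot y(t)=-\mu_\liep(x(t))_X(y(t))$, and then invoking uniqueness for the time-dependent vector field $V_t(z)=-\mu_\liep(x(t))_X(z)$ is exactly the standard route, and your remarks on global existence (compactness of $X$, linearity of the equation for $g$ in a faithful representation) and on the type mismatch in the statement (the right-hand side of the equation for $g$ should be the Lie-algebra element $\mu_\liep(x(t))$, not the tangent vector $\beta_X(x(t))$) are well taken.

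There is nothing to compare against in the paper itself: the lemma is quoted from \cite{Properties} and no proof is given here. Your write-up is in fact the expected proof one finds in that reference, so nothing further is needed.
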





\begin{theorem}\cite{Properties}\label{teo}
Let $x_0\in X$ and $x: \mathbb{R} \rightarrow X$ the negative gradient flow line of $f$ through $x_0$. There exist positive constants $\alpha,$ $C,$ $\psi,$ and $\frac{1}{2} < \gamma < 1$ such that

$$x_\infty := \lim_{t \rightarrow \infty} x(t) $$
exists. Moreover, there exist a constant $T > 0$ such that for any $t > T,$ 
\begin{align*}
    d(x(t), x_\infty) & \leq \int_t^\infty |\dot{x}(s)|ds\\
    &\leq \frac{\alpha}{1 - \gamma}(f(x(t)) - f(x_\infty))^{1-\gamma}\\
    & \leq \frac{C}{(t - T)^\psi}.
\end{align*}
\end{theorem}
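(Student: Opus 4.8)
\medskip
\noindent\emph{Sketch of the argument (following \cite{Properties}).}
The plan is to read this as a {\L}ojasiewicz--Simon convergence statement and to extract it from the {\L}ojasiewicz gradient inequality for the real analytic function $f_\liep$. Two structural facts are set up first. Since $X$ is compact, the negative gradient flow $x(t)$ of $f_\liep$ through $x_0$ is defined for all $t \ge 0$, and its $\omega$-limit set is nonempty. Since $\mu_\liep$, and hence $f_\liep = \tfrac12\parallel\mu_\liep\parallel^2$, is real analytic, the {\L}ojasiewicz gradient inequality holds at each critical point: for every critical point $y_0$ of $f_\liep$ there are a neighborhood $V$ of $y_0$, a constant $\alpha>0$ and an exponent $\gamma$ such that $\parallel\nabla f_\liep(y)\parallel \ge \tfrac1\alpha\,|f_\liep(y)-f_\liep(y_0)|^{\gamma}$ on $V$; enlarging $\alpha$ if necessary one may take $\tfrac12<\gamma<1$, which is the $\gamma$ of the statement.

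Along the flow, Lemma \ref{nmg} gives $\tfrac{d}{dt}f_\liep(x(t)) = -\parallel\nabla f_\liep(x(t))\parallel^2 = -|\dot x(t)|^2 \le 0$, so $f_\liep(x(t))$ decreases to a limit $f_\infty$; by the corollary following Lemma \ref{nmg} every point of the $\omega$-limit set is a critical point of $f_\liep$ with value $f_\infty$. Fix such a point $x_\infty$ and a {\L}ojasiewicz neighborhood $V$ of it, and put $h(t):=f_\liep(x(t))-f_\infty\ge 0$. If $h(t_1)=0$ at some finite $t_1$, then $x(t_1)$ is a critical point and $x(\cdot)$ is stationary for $t\ge t_1$ by ODE uniqueness and the conclusion is trivial, so assume $h>0$ throughout. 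On any interval on which $x(t)\in V$,
\[
-\frac{d}{dt}\,h(t)^{1-\gamma} = (1-\gamma)\,h(t)^{-\gamma}\,\parallel\nabla f_\liep(x(t))\parallel^2 \;\ge\; \frac{1-\gamma}{\alpha}\,\parallel\nabla f_\liep(x(t))\parallel = \frac{1-\gamma}{\alpha}\,|\dot x(t)|,
\]
so on such an interval the arc length of $x(\cdot)$, measured from the left endpoint, is bounded by $\tfrac{\alpha}{1-\gamma}\,h(\cdot)^{1-\gamma}$.

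Now comes the standard trapping step: since $x_\infty$ is an $\omega$-limit point, there are $t_0$ arbitrarily large with $x(t_0)$ arbitrarily close to $x_\infty$ and $h(t_0)$ arbitrarily small, so by the length bound the trajectory issued from $x(t_0)$ can be forced never to leave $V$; hence the length bound holds on all of $[t_0,\infty)$ and $\int_{t_0}^\infty|\dot x(s)|\,ds<\infty$. Consequently $x(t)$ is Cauchy as $t\to\infty$, its limit exists, and lying in the $\omega$-limit set together with $x_\infty$ it must equal $x_\infty$. Taking $T:=t_0$, the first inequality of the statement, $d(x(t),x_\infty)\le\int_t^\infty|\dot x(s)|\,ds$, is just that geodesic distance is at most path length, and the second, $\int_t^\infty|\dot x(s)|\,ds\le\tfrac{\alpha}{1-\gamma}(f_\liep(x(t))-f_\liep(x_\infty))^{1-\gamma}$, is the integral from $t$ to $\infty$ of the displayed differential inequality. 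For the last, polynomial, bound, the {\L}ojasiewicz inequality also gives $h'(t)=-\parallel\nabla f_\liep(x(t))\parallel^2\le-\tfrac1{\alpha^2}h(t)^{2\gamma}$ on $[T,\infty)$; since $2\gamma-1>0$, integrating this Bernoulli inequality yields $h(t)^{-(2\gamma-1)}\ge\tfrac{2\gamma-1}{\alpha^2}(t-T)$, hence $h(t)\le C'(t-T)^{-1/(2\gamma-1)}$ and $h(t)^{1-\gamma}\le C\,(t-T)^{-\psi}$ with $\psi:=\tfrac{1-\gamma}{2\gamma-1}>0$, after absorbing constants into $C$.

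The only genuinely nontrivial ingredient --- the main obstacle --- is the {\L}ojasiewicz gradient inequality for $f_\liep$; everything else above is elementary ODE bookkeeping. It follows from {\L}ojasiewicz's theorem once one knows $f_\liep$ is real analytic, which requires $Z$ with its complex structure, the $U^\C$-action, the momentum map $\mu$ and the submanifold $X$ to be real analytic; granting this, $\mu_\liep$ --- the orthogonal projection of $i\mu$ onto $\liep$, composed with real analytic data --- is real analytic, hence so is $f_\liep$. In the writeup I would either verify this analyticity directly or simply invoke \cite{Properties}, where precisely this convergence theorem for the negative gradient flow of the norm square is proved.
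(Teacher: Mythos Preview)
The paper does not give its own proof of this theorem: it is quoted verbatim from \cite{Properties} and simply invoked as background for the later material, so there is no ``paper's proof'' to compare against. Your sketch is the standard {\L}ojasiewicz--Simon argument (real analyticity of $f_\liep$ $\Rightarrow$ {\L}ojasiewicz gradient inequality $\Rightarrow$ finite length of the trajectory via the $h^{1-\gamma}$ trick $\Rightarrow$ convergence and polynomial rate via the Bernoulli ODE $h'\le -\alpha^{-2}h^{2\gamma}$), and this is exactly the mechanism behind the result proved in \cite{Properties}; your outline is correct and matches that reference.
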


\subsection{Stratifications of the Norm Square of Gradient map.}\label{ssss}
We recall the stratification theorem. For details see \cite{heinzner-schwarz-stoetzel}. Let $C$ denote the critical set of $f_p$, $\mathfrak{B} := \mu_\mathfrak{p}(C)$ and $\mathfrak{B}_+ := \mathfrak{B}\cap \mathfrak{a}_+.$
Let $X^{ss} := \{x\in X : \overline{G\cdot x} \cap \mu_\mathfrak{p}^{-1}(0) \neq \emptyset \},$ For $\beta \in \mathfrak{B}_+,$ set
\begin{align*}
    & X|_{|\beta|^2} := \{x\in X : \overline{\text{exp}(\mathbb{R}\beta)\cdot x} \cap (\mu_\mathfrak{p}^\beta)^{-1}(|\beta|^2) \neq \emptyset \}\\
    & X^\beta := \{x\in X : \beta_X(x) = 0 \}\\
   & X^\beta|_{|\beta|^2} := X^\beta \cap X|_{|\beta|^2}\\
   & X^{\beta +}|_{|\beta|^2} := \{x\in X|_{|\beta|^2}: \lim_{t\to -\infty}\text{exp}(t\beta)\cdot x \; \text{exists and it lies in} \; X^\beta|_{|\beta|^2}\}
\end{align*}
 For $\beta\in \mathfrak{p},$ let $$G^{\beta+} :=\{g \in G : \lim_{t\to - \infty} \text{exp} ({t\beta}) g
  \text{exp} ({-t\beta}) \text { exists} \}.$$
  $G^{\beta+}$ is a parabolic subgroup with Levi factor $$G^\beta=\{g\in G:\, \mathrm{Ad}(g)(\beta)=\beta\}.$$
  
  $G^\beta$ is a compatible subgroup of $(U^\beta)^\C = (U^\C)^\beta.$

The set $X^{\beta +}|_{|\beta|^2}$ is $G^{\beta +}$-invariant. Let $\mu_{\mathfrak{p}^\beta}$ be a gradient map of the $G^\beta$-action on $X^{\beta +}|_{|\beta|^2}.$ Since $\beta$ is in the center of $\mathfrak{g}^\beta$, we have a shifted gradient map $$\widehat{\mu_{\mathfrak{p}^\beta}} := \mu_{\mathfrak{p}^\beta} - \beta.$$ Let 
$$
S^{\beta +} := \{x \in X^{\beta +}|_{|\beta|^2}: \overline{G^\beta \cdot x} \cap \mu_{\mathfrak{p}^\beta}^{-1}(\beta) \neq \emptyset \}.
$$ $S^{\beta +}$ coincides with the set of semistable points of $G^\beta$ in $X^{\beta +}|_{|\beta|^2}$ after shifting.

\begin{definition}
The $\beta$-stratum of $X$ is given by $S_\beta := G\cdot S^{\beta +}.$
\end{definition}

\begin{theorem}\label{Stratification}
(Stratification Theorem \cite[7.3]{heinzner-schwarz-stoetzel}). Suppose $X$ is a compact $G$-invariant submanifold of $Z.$ Then $\mathfrak{B}_+$ is finite and 
$$
X = \bigsqcup_{\beta \in \mathfrak{B}_+}S_\beta.
$$
Moreover, each $S_\beta$ is a locally closed submanifold of $X$ and
$$
\overline{S_\beta}\subset S_\beta \cup \bigcup_{|\gamma| > |\beta|}S_\gamma.
$$
\end{theorem}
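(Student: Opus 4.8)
The plan is to realise the decomposition through the negative gradient flow of $f_\liep$ and to read off each stratum from the limiting critical point. By Theorem \ref{teo}, for every $x\in X$ the negative gradient flow line $x(t)$ converges to a critical point $x_\infty$, which by Lemma \ref{nmg} satisfies $\beta_X(x_\infty)=0$ with $\beta:=\mu_\liep(x_\infty)$. Since $\mu_\liep$ is $K$-equivariant and the closed Weyl chamber $\mathfrak{a}_+$ is a fundamental domain for the $K$-action on $\liep$, the $K$-orbit of $\mu_\liep(x_\infty)$ meets $\mathfrak{a}_+$ in a single point $\beta(x)\in\mathfrak{B}_+$. Assigning to each $x$ this label $\beta(x)$ partitions $X$, and I would verify that the fibre over $\beta$ is exactly $S_\beta=G\cdot S^{\beta+}$. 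The inclusion $S^{\beta+}\subset\{x:\beta(x)=\beta\}$ is immediate from the definition of $S^{\beta+}$ as the shifted $G^\beta$-semistable set, and $G$-invariance of the labelling (together with Lemma \ref{Gradient}, which identifies the flow with a curve inside the $G$-orbit) propagates it to the whole $G$-saturation; the reverse inclusion uses convergence of the flow into the shifted zero set $\mu_{\mathfrak{p}^\beta}^{-1}(\beta)$. This yields the disjoint decomposition $X=\bigsqcup_{\beta\in\mathfrak{B}_+}S_\beta$.

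For finiteness of $\mathfrak{B}_+$ I would argue that $f_\liep$ takes only finitely many critical values. Since $X$ is compact and $f_\liep$ is real-analytic, the {\L}ojasiewicz inequality underlying Theorem \ref{teo} forces the critical values $\tfrac12|\beta|^2$ to be isolated; compactness then leaves only finitely many, so $\mathfrak{B}\cap\mathfrak{a}_+=\mathfrak{B}_+$ is finite. Alternatively, the critical points are the zeros of the shifted gradient maps of the Levi subgroups $G^\beta$, and the admissible $\beta$ are constrained to a finite set by the finitely many conjugacy classes of such $G^\beta$ together with the discreteness of the weights involved.

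To see that each $S_\beta$ is a locally closed submanifold I would work locally near the critical set of type $\beta$ and exploit the parabolic structure. The set $X^{\beta+}|_{|\beta|^2}$ consists of the points whose backward $\exp(\mathbb{R}\beta)$-flow converges into $X^\beta|_{|\beta|^2}$; it is the unstable manifold of the $\beta$-direction and is a $G^{\beta+}$-invariant submanifold. Within it, $S^{\beta+}$ is, after shifting by $\beta$, precisely the semistable set for the $G^\beta$-action, so Theorem \ref{good-quotient} applied to $G^\beta$ shows $S^{\beta+}$ is open in $X^{\beta+}|_{|\beta|^2}$, hence locally closed in $X$. Finally, since $G^{\beta+}$ is parabolic with Levi factor $G^\beta$ and unipotent radical acting in the transverse directions, I would show that the saturation map $G\times_{G^{\beta+}}S^{\beta+}\to X$ is an injective immersion onto $S_\beta=G\cdot S^{\beta+}$ with locally closed image, giving the submanifold claim.

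The frontier condition rests on monotonicity of $f_\liep$ along the flow. By Lemma \ref{nmg}, $\frac{d}{dt}f_\liep(x(t))=-\|\beta_X(x(t))\|^2\le 0$, so $f_\liep$ is non-increasing along negative gradient lines and strictly decreasing off critical points; hence $f_\liep(x)\ge\tfrac12|\beta|^2$ for every $x\in S_\beta$, with the infimum $\tfrac12|\beta|^2$ attained in the limit. Now take $y\in\overline{S_\beta}$ lying in some $S_\gamma$, and write $y=\lim_n x_n$ with $x_n\in S_\beta$. For each fixed finite time $t$, continuous dependence of the flow $\phi_t$ gives $f_\liep(\phi_t(x_n))\to f_\liep(\phi_t(y))$, and since $f_\liep(\phi_t(x_n))\ge\tfrac12|\beta|^2$ we obtain $f_\liep(\phi_t(y))\ge\tfrac12|\beta|^2$; letting $t\to\infty$ yields $\tfrac12|\gamma|^2\ge\tfrac12|\beta|^2$, i.e. $|\gamma|\ge|\beta|$. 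For $y\notin S_\beta$ the inequality is strict, since the local normal form about the critical set separates the strata sitting at a common norm level, so a point of $S_\gamma$ with $|\gamma|=|\beta|$ and $\gamma\neq\beta$ cannot lie in $\overline{S_\beta}$. I expect the main obstacle to be the third step: establishing that $S_\beta=G\cdot S^{\beta+}$ is genuinely a locally closed \emph{submanifold}, which requires the slice theorem near the critical $K$-orbit and careful control of the unipotent radical of $G^{\beta+}$ so that the $G$-saturation remains smooth and injectively immersed.
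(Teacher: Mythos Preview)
The paper does not prove this theorem: it is stated as the Stratification Theorem of Heinzner--Schwarz--St\"otzel and is quoted from \cite[7.3]{heinzner-schwarz-stoetzel} without argument, so there is no proof in the paper to compare your proposal against.

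That said, your sketch follows the same Morse--Kirwan architecture that underlies the cited reference: convergence of the negative gradient flow (their analogue of Theorem~\ref{teo}) to assign a label $\beta(x)\in\mathfrak{B}_+$, finiteness via real-analyticity/\L ojasiewicz, openness of $S^{\beta+}$ in $X^{\beta+}|_{|\beta|^2}$ from the semistability theory for $G^\beta$, and the frontier condition from monotonicity of $f_\liep$ along the flow. Two points in your outline are genuinely delicate and are exactly where \cite{heinzner-schwarz-stoetzel} does the work. First, the identification of the flow-limit fibre with $S_\beta=G\cdot S^{\beta+}$ is not automatic: one must show that the limit point of the flow from $x$ lies in $\mu_{\liep^\beta}^{-1}(\beta)$ (not merely in some critical set with the same norm) and, conversely, that any $x\in G\cdot S^{\beta+}$ actually flows to a critical point with $\mu_\liep$-value in $K\cdot\beta$; this uses a careful analysis of the map $p_\beta:X^{\beta+}|_{|\beta|^2}\to X^\beta|_{|\beta|^2}$ and the compatibility of the $G^{\beta+}$- and $G^\beta$-actions. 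Second, the claim that $G\times_{G^{\beta+}}S^{\beta+}\to X$ is an injective immersion with locally closed image requires a slice argument near the critical $K$-orbit together with control of the unipotent radical of $G^{\beta+}$; you flag this yourself, and it is indeed the technical heart of the proof in \cite{heinzner-schwarz-stoetzel}. Your strictness argument for the frontier condition (ruling out $|\gamma|=|\beta|$ with $\gamma\neq\beta$) is only gestured at and also relies on this local normal form.
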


\begin{proposition}\cite[6.12]{heinzner-schwarz-stoetzel} \label{lemm}
If $z\in X$ satisfies $$f_\liep(z) = max_{x\in X}f_\liep(x).$$ Then $G\cdot z = K\cdot z$ and so it is closed orbit.
\end{proposition}

Suppose $G\subset U^\C$ is a real form. This means that $\lieg = \liek \oplus \liep$ implies $\liu = \liek \oplus i\liep$ and the momentum map $\mu$ decompose to $\mu = \mu_\liek \oplus -i\mu_\liep.$ We fix a $G$-invariant compact Lagrangian submanifold $X$ of $Z$ such that $X\subset \mu_\liek^{-1}(0).$

\begin{lemma}\label{critical points}
$x\in X$ is a critical point of $f_\liep$ if and only if $x$ is a critical point of $f.$
\end{lemma}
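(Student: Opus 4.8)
\emph{Proof idea.} The plan is to prove the sharper pointwise identity: for every $x\in X$ the gradient of $f$ at $x$, computed with respect to the \Keler metric on $Z$, equals the gradient of $f_\liep$ at $x$, computed with respect to the induced metric on $X$. Once this is established the statement follows at once, since on each of $Z$ and $X$ the metric is nondegenerate, so a point is critical exactly when the corresponding gradient vanishes.

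I would first compute $\mathrm{grad}\,f$ on $Z$. Expanding $\mu=\sum_i\mu^{\xi_i}\xi_i$ in an orthonormal basis $\{\xi_i\}$ of $\liu$ and using $\mathrm{grad}\,\mu^\xi=J\xi_Z$, linearity of $\xi\mapsto\xi_Z$ gives $\mathrm{grad}\,f(z)=J\bigl((\mu(z))_Z\bigr)(z)$, where $(\mu(z))_Z$ is the fundamental vector field on $Z$ associated with $\mu(z)$ regarded as a fixed element of $\liu$. Next I would bring in the hypotheses. Since $G\subset U^\C$ is a real form, $\mu=\mu_\liek\oplus(-i\mu_\liep)$; since $X\subset\mu_\liek^{-1}(0)$, for $x\in X$ this collapses to $\mu(x)=-i\,\mu_\liep(x)\in\liu$. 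Putting $\beta:=\mu_\liep(x)\in\liep$ and using the identity $J(-i\beta_Z)=\beta_Z$ already recorded in the introduction (the chain $\mathrm{grad}\,\mu_\liep^\beta=\mathrm{grad}\,\mu^{-i\beta}=J(-i\beta_Z)=\beta_Z$), I obtain $\mathrm{grad}\,f(x)=\beta_Z(x)$. As $\beta\in\lieg$ and $X$ is $G$-invariant, $\beta_Z(x)=\beta_X(x)\in T_xX$, and by Lemma~\ref{nmg} this is exactly $\mathrm{grad}\,f_\liep(x)$. Hence $\mathrm{grad}_Z f(x)=\mathrm{grad}_X f_\liep(x)$ for all $x\in X$; in particular $\mathrm{grad}_Z f$ is tangent to $X$ along $X$. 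Therefore $df(x)=0$ on $T_xZ$ if and only if $df_\liep(x)=0$ on $T_xX$, which is the claim. (The same computation also shows $f|_X=f_\liep$, since $\|{-i\mu_\liep(x)}\|=\|\mu_\liep(x)\|$ by the choice of scalar products.)

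The one thing that needs care — the only real obstacle here — is not to conflate the two ambient manifolds: $f$ is defined on $Z$ and its gradient is a priori merely a section of $TZ|_X$, whereas $f_\liep$ lives on $X$. The whole point of the hypothesis $X\subset\mu_\liek^{-1}(0)$ is to kill the ``$\liek$-part'' $J\bigl((\mu_\liek(x))_Z\bigr)(x)$ of $\mathrm{grad}_Z f$, which in general need not be tangent to $X$; once it is gone, the holomorphy relation $(i\xi)_Z=J\xi_Z$ turns the remaining term — a priori a $J$-rotated $U$-vector field — into the genuine $G$-vector field $\beta_X$, tangent to the $G$-invariant $X$. I would also double check the normalization conventions ($\mathrm{grad}\,\mu^\xi=J\xi_Z$, and multiplication by $i$ an isometry between $\liu$ and $i\liu$), all fixed in the introduction, but these cause no difficulty.
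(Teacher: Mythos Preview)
Your argument is correct and follows the same route as the paper: use $X\subset\mu_\liek^{-1}(0)$ together with the real-form decomposition $\mu=\mu_\liek\oplus(-i\mu_\liep)$ to get $\mu|_X=-i\mu_\liep|_X$, and then conclude that $\mathrm{grad}_Z f$ and $\mathrm{grad}_X f_\liep$ agree along $X$. Your version is in fact more explicit than the paper's, which simply asserts that the two gradient flows coincide without spelling out the tangency issue you rightly flag.
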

\begin{proof}
Since $\mu$ decompose to $\mu = \mu_\liek \oplus -i\mu_\liep$ and $X\subset \mu_\liek^{-1}(0)$ then, $\mu|_X = -i\mu_\liep|_X.$ This implies that for $x\in X,$ the negative gradient flow of $f_p$ through $x$ coincides with the gradient flow of $f$ through $x$ and so, the result follows.
\end{proof}
Lemma \ref{critical points} asserts that for $\beta \in \liep,$ $K\cdot\beta$ is a critical orbit of $f_\liep$ if and only if $U\cdot (-i\beta)$ is a critical orbit of $f.$

\begin{theorem}\label{unique stratum}
Suppose $X$ is a $G$-invariant compact connected Lagrangian submanifold of $Z$ such that $X\subset \mu_\liek^{-1}(0).$ Then there exists a unique open stratum in $X$. Moreover, this stratum is dense in $X.$
\end{theorem}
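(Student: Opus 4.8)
The plan is to use the stratification of Theorem \ref{Stratification} together with the reduction furnished by Lemma \ref{critical points}. First I would observe that the open strata of the norm square $f_\liep$ on $X$ are exactly the strata $S_\beta$ with $\dim S_\beta = \dim X$, and that $S_0 = X^{ss}_{\mu_\liep}$ is always one of them: since $0\in\mathfrak{B}_+$ (one must check $0$ is a critical value, which is clear as $\mu_\liep^{-1}(0)$ is $K$-invariant and nonempty by compactness of $X$ -- indeed $f_\liep$ attains its minimum), and $S_0$ is open by Theorem \ref{good-quotient}. Thus it suffices to show $S_0 = X^{ss}_{\mu_\liep}$ is dense in $X$; connectedness of $X$ then forces every other stratum to have empty interior, so $S_0$ is the unique open stratum.

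The key step is therefore density of $X^{ss}_{\mu_\liep}$. Here is where the real form hypothesis and $X\subset\mu_\liek^{-1}(0)$ enter through Lemma \ref{critical points}: on $X$ we have $\mu|_X = -i\mu_\liep|_X$, so $f|_X = f_\liep$ up to the constant coming from the (vanishing) $\liek$-part, and the negative gradient flow of $f_\liep$ coincides with the gradient flow of $f$ restricted to $X$. Now I would invoke the Kähler/Kirwan picture for the $U^\C$-action on the compact Kähler manifold $Z$: every point of $Z$ flows under the negative gradient flow of $f$ to a critical point, and the generic point flows to $\mu^{-1}(0)$, i.e.\ $Z^{ss}_\mu$ is open and dense in $Z$ (this is the standard fact, also used in the proof of Theorem \ref{Semistable-points}). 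The point is to transfer this to $X$: because $X$ is $G$-invariant and compact, and because on $X$ the $f_\liep$-flow and the $f$-flow agree, every $x\in X$ flows within $X$ to a critical point of $f_\liep$; moreover the set of $x\in X$ whose limit lies in $\mu_\liep^{-1}(0)$ is precisely $X^{ss}_{\mu_\liep}\cap X = X^{ss}_{\mu_\liep}$. So density of $X^{ss}_{\mu_\liep}$ in $X$ amounts to showing that the union of the non-minimal strata $\bigsqcup_{\beta\neq 0}S_\beta$ has empty interior in $X$.

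To establish that, I would argue stratum by stratum. Fix $\beta\in\mathfrak{B}_+$ with $\beta\neq 0$ and consider $S_\beta = G\cdot S^{\beta+}$. The stratum $S^{\beta+}$ sits inside $X^{\beta+}|_{|\beta|^2}$, which fibers over the critical set $X^\beta|_{|\beta|^2}$ via the limit map $\lim_{t\to-\infty}\exp(t\beta)\cdot$, and one has the dimension count $\dim S_\beta = \dim X^{\beta+}|_{|\beta|^2} + \dim(G/G^{\beta+})$; the claim $\dim S_\beta < \dim X$ is equivalent to saying the unstable direction $-\beta$ is genuinely unstable, i.e.\ $\beta_X$ does not vanish identically on a neighbourhood. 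This is exactly the content of Lemma \ref{increasing}/Corollary \ref{lem}: along $\exp(t\beta)x$ the function $\mu_\liep^\beta$ is strictly increasing unless $\beta_X(x)=0$, so the set $X^\beta$ where $\beta_X$ vanishes is a proper closed submanifold whenever $\beta\neq 0$, and the ``attracting'' set $X^{\beta+}|_{|\beta|^2}$ of a proper critical submanifold under a nontrivial flow has codimension at least $1$. Combined with Theorem \ref{teo} (every gradient line converges) this gives $X = X^{ss}_{\mu_\liep} \cup \bigsqcup_{\beta\neq0}S_\beta$ with each $S_\beta$ of codimension $\geq 1$ and locally closed, hence $X^{ss}_{\mu_\liep}$ is open and dense; being also connected-nonempty and open it is the unique open stratum, which proves the theorem.

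\textbf{Main obstacle.} The delicate point is the codimension estimate $\dim S_\beta < \dim X$ for $\beta\neq 0$, equivalently that the minimal stratum is the only full-dimensional one. In the general (non-real-form) setting this can fail — the paper itself says there may be several open strata — so the argument must genuinely use $X\subset\mu_\liek^{-1}(0)$ and the real-form condition, presumably via Lemma \ref{critical points} to pull the stratification of $f_\liep$ on $X$ back to the stratification of the norm square $f$ of the ambient momentum map $\mu$ on the Kähler manifold $Z$, where uniqueness of the open (minimal) stratum is the classical Kirwan result. Making that comparison precise — showing $S_\beta\cap X$ is contained in the $\beta$-unstable stratum of $(Z,f)$, which has positive codimension in $Z$ and hence, being cut out transversally-enough, positive codimension in the Lagrangian $X$ — is the crux.
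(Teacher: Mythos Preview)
Your overall strategy---reduce to Kirwan's uniqueness of the open stratum on the ambient $Z$---is the paper's, and you correctly isolate the crux in your final paragraph. But the argument you actually offer for the codimension estimate (that $X^\beta$ is a proper submanifold, hence the attracting set $X^{\beta+}|_{|\beta|^2}$ has codimension $\geq 1$, hence $S_\beta$ does) does not invoke the Lagrangian hypothesis at all, and therefore cannot work: that chain of reasoning would apply to any compact $G$-invariant $X$, yet the paper explicitly notes that in the general real setting there \emph{can} be several open strata. Lemma \ref{increasing} only tells you $X^\beta$ is proper; it gives no control on $\dim(G\cdot S^{\beta+})$.

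The missing step is the one you allude to but do not carry out. Via Lemma \ref{critical points} each $X$-stratum $S_\beta$ is the intersection with $X$ of a $Z$-stratum $\bar S_{-i\beta}$ for $f$, and Kirwan's strata on the K\"ahler manifold $Z$ are \emph{complex} submanifolds, hence $J$-invariant. Now use the Lagrangian condition pointwise: for $p\in X$ one has $T_pZ = T_pX \oplus J(T_pX)$. If $S_\beta$ is open in $X$ at $p$, then $T_pX = T_pS_\beta \subset T_p\bar S_{-i\beta}$, and by $J$-invariance also $J(T_pX)\subset T_p\bar S_{-i\beta}$; hence $T_p\bar S_{-i\beta} = T_pZ$ and $\bar S_{-i\beta}$ is open in $Z$. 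Kirwan's uniqueness of the open stratum on $Z$ then forces uniqueness on $X$. Density follows since the remaining finitely many strata are locally closed of positive codimension. A separate minor error: you claim $0\in\mathfrak{B}_+$ because ``$f_\liep$ attains its minimum'', but the minimum value of $f_\liep$ need not be zero, so $\mu_\liep^{-1}(0)$ may be empty; the paper sidesteps this by working with the minimal stratum $S_\beta$ for whatever $\beta$ arises, rather than assuming $\beta=0$.
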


\begin{proof}

It is well known that the strata associated with $f$ are locally closed submanifolds of $Z$ of even dimension \cite{Kirwan}. Since it is impossible to disconnect a manifold by removing submanifolds of codimension at least two, there must be a unique open stratum. Let $\overline{S_{-i\beta}}$ be the stratum associated with the minimum of $F$ for the $U^\C$-action on $Z.$ $\overline{S_{-i\beta}}$ is the unique open stratum of $f$. We set $S_\beta = \overline{S_{-i\beta}}\cap X$ which is a stratum of $f_\liep$ by Lemma \ref{critical points}. We claim that $S_\beta$ is the minimal stratum of $f_\liep$. Indeed, For $p\in S_\beta,$ $T_pS_\beta = T_pX \subset T_p\overline{S_{-i\beta}}$ and $J(T_pX)\subset T_p\overline{S_{-i\beta}}.$ This means that $T_pS_\beta \oplus J(T_pS_{\beta}) = T_pZ.$ This implies that $S_\beta$ is open.
Suppose it is not the unique open stratum. Then for each open stratum $S_{\widetilde{\beta}}$, there exists $\overline{S_{-\widetilde{i\beta}}}$ such that $S_{\widetilde{\beta}} = \overline{S_{-\widetilde{i\beta}}}\cap X.$ Then $\overline{S_{-\widetilde{i\beta}}}$ is open. But $\overline{S_{-i\beta}}$ is unique. It follows that $S_\beta$ is the unique open strata associated with $f_\liep.$

Since $X$ is compact, there are finitely many strata \cite[8.5]{heinzner-schwarz-stoetzel}. But $X$ has exactly one open stratum and the other strata are codimension at least one. This implies that the union of these other strata has empty interior. Thus, the open stratum $S_{\beta}$ is dense.
\end{proof}

Let $\beta\in \liep.$ The orbit $U\cdot \beta$ can be identified with the coadjoint orbit $U\cdot (-i\beta)\subset \liu$ which is a complex flag manifold. There is an induced $U^\C$-action on $U\cdot \beta$ and the map $\mu_\beta : Z\times U\cdot \beta \to \liu;$ $(x,\xi)\mapsto \mu(z)-Ad_g\xi$ with $g\in U$ is the moment map for the $U$-action on $Z\times U\cdot \beta.$ There is a corresponding Gradient map called the shifting of $\mu_\liep$ with respect to $\beta$ given as $\mu_{\liep, \beta} : X \times K\cdot \beta \to \liep, (x,\xi) \mapsto \mu_\liep - Ad_k\beta.$

By a result in \cite{Gorodski}, $K\cdot \beta$ is a Lagrangian submanifold of $U\cdot \beta.$ Hence, $X\times K\cdot \beta$ is a Lagrangian submanifold of $Z\times U\cdot \beta$ and $X\times K\cdot \beta \subset \mu_{\liek, \beta}^{-1}(0).$

\begin{corollary}
In the above assumption, the norm square of the gradient map $\mu_{\liep, \beta}$ has a unique open stratum that is dense.
\end{corollary}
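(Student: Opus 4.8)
The plan is to obtain the corollary as a direct application of Theorem~\ref{unique stratum}, with the ambient manifold $Z$ replaced by the product $Z\times U\cdot\beta$ and the Lagrangian $X$ replaced by $X\times K\cdot\beta$. First I would record that $Z\times U\cdot\beta$, with the product \keler structure, is again a compact connected \keler manifold on which $U^\C$ acts holomorphically and $U$ acts in a Hamiltonian way: the factor $U\cdot\beta\cong U\cdot(-i\beta)$ is a coadjoint orbit of the compact group $U$, hence a compact connected \keler flag manifold carrying the induced holomorphic $U^\C$-action and a momentum map, and all of these features pass to the product, the total momentum map being $\mu_\beta$ described above. Since $G$ is a real form of $U^\C$, it remains a compatible subgroup for the diagonal action on $Z\times U\cdot\beta$, one has $\liu=\liek\oplus i\liep$, and the momentum map splits as $\mu_\beta=\mu_{\liek,\beta}\oplus(-i\,\mu_{\liep,\beta})$, where $\mu_{\liep,\beta}$ is precisely the shifted gradient map $(x,\xi)\mapsto\mu_\liep(x)-\Ad_k\beta$ introduced before the corollary.

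Next I would check that $X\times K\cdot\beta$ satisfies the hypotheses of Theorem~\ref{unique stratum} inside $Z\times U\cdot\beta$. It is compact, being a product of compact spaces, and connected, since $K\cdot\beta$ is the continuous image of the connected group $K$. It is Lagrangian in $Z\times U\cdot\beta$: this combines the hypothesis that $X$ is Lagrangian in $Z$ with the result of \cite{Gorodski}, recalled above, that $K\cdot\beta$ is Lagrangian in $U\cdot\beta$. The same two facts, together with $X\subset\mu_\liek^{-1}(0)$, give the inclusion $X\times K\cdot\beta\subset\mu_{\liek,\beta}^{-1}(0)$. Finally, $X\times K\cdot\beta$ is $G$-invariant: $X$ is $G$-invariant by hypothesis, and $K\cdot\beta$, viewed inside the flag manifold $U\cdot\beta$, is stable under the induced $G$-action — this is what underlies the definition of the shifted gradient map $\mu_{\liep,\beta}$ on $X\times K\cdot\beta$ recalled before the corollary, and concretely it reflects that $K\cdot\beta$ is the closed $G$-orbit there.

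Granting these verifications, Theorem~\ref{unique stratum} applied to the pair $(Z\times U\cdot\beta,\,X\times K\cdot\beta)$ yields that the norm square $f_{\liep,\beta}:=\frac{1}{2}\parallel\mu_{\liep,\beta}\parallel^2$ on $X\times K\cdot\beta$ has a unique open stratum, and that this stratum is dense; this is exactly the assertion of the corollary (internally it invokes Lemma~\ref{critical points} for the product and the Stratification Theorem~\ref{Stratification}). The only point requiring thought is the $G$-invariance of $K\cdot\beta\subset U\cdot\beta$; everything else transfers formally, so that once it is in place the corollary follows simply by invoking Theorem~\ref{unique stratum}.
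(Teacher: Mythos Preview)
Your proposal is correct and matches the paper's approach: the paper states the corollary immediately after recording (via \cite{Gorodski}) that $X\times K\cdot\beta$ is Lagrangian in $Z\times U\cdot\beta$ and lies in $\mu_{\liek,\beta}^{-1}(0)$, and offers no separate proof, so the intended argument is precisely to apply Theorem~\ref{unique stratum} to this product, which is what you do. Your flagging of the $G$-invariance of $K\cdot\beta$ inside the flag manifold $U\cdot\beta$ is apt---the paper leaves this implicit---and your identification of it as the closed $G$-orbit (for $G$ a real form) is the right justification.
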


\begin{theorem}\label{convex}
Suppose $X$ is a $G$-invariant compact connected Lagrangian submanifold of $Z$ such that $X\subset \mu_\liek^{-1}(0).$ The set $\mu_\liep(X) \cap \liea_+$ is a convex polytope.
\end{theorem}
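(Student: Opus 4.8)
The plan is to deduce the theorem from the stratification theorem together with Theorem \ref{unique stratum}, via a local-to-global convexity principle; the decisive point --- the one that fails in the general case $X\neq Z$ --- is the uniqueness of the open stratum. Since the $K$-action on $\liep$ is polar with section $\liea$ and $\liea_+$ is a fundamental domain for the associated finite Weyl group, every $K$-orbit in $\liep$ meets $\liea_+$ in exactly one point, giving a continuous retraction $r:\liep\to\liea_+$. Put $s:=r\circ\mu_\liep:X\to\liea_+$. As $\mu_\liep$ is $K$-equivariant and $X$ is $K$-invariant one has $s(X)=\mu_\liep(X)\cap\liea_+$, so this set is compact and connected, being the continuous image of the compact connected space $X$; it remains to prove that it is convex and a polytope.

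By Theorem \ref{Stratification} one has $X=\bigsqcup_{\beta\in\mathfrak{B}_+}S_\beta$ with $\mathfrak{B}_+$ finite, and by Theorem \ref{unique stratum} there is a unique open dense stratum $S_{\beta_0}$. By continuity of $s$, density of $S_{\beta_0}$ and compactness of $X$, $s(X)=\overline{s(S_{\beta_0})}$; so convexity of $s(X)$ reduces to convexity of $s(S_{\beta_0})$, and the boundary of $s(X)$ is accounted for by the finitely many sets $s(\overline{S_\beta})$, $\beta\neq\beta_0$, each of which I would handle by the shifting construction: replacing $\mu_\liep$ on $X$ by the shifted gradient map $\mu_{\liep,\beta}$ on the compact Lagrangian $X\times K\cdot\beta\subset\mu_{\liek,\beta}^{-1}(0)$ (equivalently by the $G^\beta$-gradient map on $X^\beta|_{|\beta|^2}$ after shifting), so that one is again in the hypotheses of the theorem, and the preceding Corollary supplies the analogous ``unique open dense stratum'' statement there.

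To obtain convexity of $s(S_{\beta_0})$ --- hence of $s(X)=\overline{s(S_{\beta_0})}$ --- I would use the following local-to-global principle. The restriction of $s$ to each stratum is locally convex, the local model near a point being the slice theorem / local normal form for gradient maps of Heinzner--Schwarz--St\"otzel \cite{heinzner-schwarz-stoetzel}, in which $s$ is modelled on a linear gradient-map sweep where convexity and polyhedrality of the image are classical \cite{Kirwan,gs}. A closed, locally convex map from a connected space into $\liea_+$ with connected fibres has convex image; so the remaining ingredient is connectedness of the fibres $s^{-1}(v)$ for $v\in\liea_+$. This is where the hypothesis enters through Theorem \ref{unique stratum}: the presence of a \emph{single} open stratum forces the fibre over the minimal position (governed by $\mu_\liep^{-1}(0)$) to be connected, and for general $v$ the shifting construction reduces the fibre to a zero level of a shifted gradient map, where connectedness is Theorem \ref{Semistable-points}. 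With all fibres of $s$ connected, $s(X)$ is convex; and since the local normal forms are polyhedral and there are only finitely many strata (Theorem \ref{Stratification}), $s(X)$ has polyhedral boundary, so it is a convex polytope.

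The main obstacle is exactly the fibre-connectedness input in the last step and its propagation through the shifting: one must convert the global hypothesis ``$X$ is a Lagrangian submanifold contained in $\mu_\liek^{-1}(0)$'' --- which through Theorem \ref{unique stratum} amounts to there being one open stratum --- into genuine connectedness of every fibre of the sweep map, including the shifted fibres attached to the Levi subgroups $G^\beta$, and then splice the local convex models across adjacent strata so that the union is a single convex polytope rather than merely a locally convex set. In the general case $X\neq Z$ this breaks down precisely because the open stratum need not be unique, which is why the Lagrangian hypothesis is essential. A secondary technicality is that the open stratum $S_{\beta_0}=\overline{S_{-i\beta_0}}\cap X$ is in general noncompact, so convexity of $s(S_{\beta_0})$ must come from the local-to-global principle directly and not from a naive application of the theorem to $S_{\beta_0}$ itself.
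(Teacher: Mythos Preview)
Your route differs substantially from the paper's. The paper does not use a local-to-global principle, local normal forms, or fibre connectedness at all. It argues by contradiction via Kirwan's ball trick: $\mu_\liep(X)\cap\liea_+$ is already known to be a finite union of convex polytopes by \cite[8.5]{heinzner-schuetzdeller}; if this union were not convex, then for some $\beta\in\liea$ and small $r>0$ the sphere $\partial B_r(\beta)$ meets $\mu_\liep(X)\cap\liea_+$ in exactly two points $\xi_1\neq\xi_2$. These are two distinct minimum values of $\|\mu_{\liep,\beta}\|^2$ on the compact Lagrangian $X\times K\cdot\beta\subset\mu_{\liek,\beta}^{-1}(0)$, and since $K\cdot\xi_i\cap\liea_+=\{\xi_i\}$ they produce two disjoint open minimal strata $S_{\xi_1}$, $S_{\xi_2}$, contradicting the Corollary just before the theorem (the unique-open-stratum statement for the shifted map). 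That is the entire argument; the unique-open-stratum result is used once, as a contradiction device, and nothing about fibres or slices is needed.

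Your scheme has a genuine gap at the fibre-connectedness step. You cite Theorem~\ref{Semistable-points} for connectedness of the zero level of the shifted gradient map, but that theorem is about $Z^{ss}_{\mu_\liep}$ for the ambient compact \Keler manifold $Z$; it says nothing about the semistable locus inside the real Lagrangian $X$ or inside $X\times K\cdot v$, which is what you need. The correct input would be: the Corollary gives that $(X\times K\cdot v)^{ss}_{\mu_{\liep,v}}$ is the unique open dense stratum, hence connected, and then continuity of the gradient-flow limit (Theorem~\ref{teo}) pushes this down to connectedness of $\mu_{\liep,v}^{-1}(0)\cong\mu_\liep^{-1}(K\cdot v)$. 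Neither of these is Theorem~\ref{Semistable-points}. You also invoke the symplectic local normal form of \cite{Kirwan,gs} for local convexity of $s$, but $X$ is a real Lagrangian, not symplectic; the relevant local convexity statement is the one in \cite{heinzner-schuetzdeller}, and it is not automatic from the Kirwan/Guillemin--Sternberg model. So while your outline is in the Hilgert--Neeb--Plank spirit and could likely be completed, both of its decisive inputs are mis-sourced as written, and the paper's argument bypasses these issues entirely.
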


\begin{proof}
The proof follows the geometric idea used in Kirwan's proof of convexity result in \cite{Kirwan2}. The proof used the fact that if a subset $D$ of a Euclidean vector space is a finite union of convex polytopes and is not convex, then for any sufficiently small $r > 0$ there exists a point $\beta\in \liea$ such that the closed ball centered at $\beta$ with radius $r$ meets $D$ in precisely two points.

$\mu_\liep(X) \cap \liea_+$ is a finite union of convex polytopes \cite[8.5]{heinzner-schuetzdeller}.
Suppose $\mu_\liep(X) \cap \liea_+$ is not convex. There is a point $\beta\in \liea_+$ and $r > 0,$ such that the boundary of the closed ball $B_r(\beta)$ meets $\mu_\liep(X) \cap \liea_+$ in at least two points. This implies that the function $\parallel \mu_{\liep, \beta} \parallel^2$ attains it's minimum value in two or more points. Suppose $\xi_1$ and $\xi_2$ are two of the minimum points of $\parallel \mu_{\liep, \beta}\parallel^2,$ then since $\xi_1 \neq \xi_2$ and keeping in mind that $K\cdot \xi_1 \cap \mathfrak a_{+}=\{\xi_1\}$, by the stratification theorem \cite{heinzner-schwarz-stoetzel}, see also Theorem \ref{Stratification},  we have two disjoint open strata $S_{\xi_1}$ and $S_{\xi_2}$ and this is a contradiction of the above lemma.
\end{proof}

\begin{example}
Let $Z = Gr(4, \C^8),$ a Grassmannian 4-planes. Let $B\in SU(4)$ such that $\overline{B} \neq B.$ Set 
\begin{equation*}
     A =
\left(\begin{tabular}{ m{0.5cm}| m{0.5cm} } 
  Id & 0\\ 
  \hline
 0 & B\\ 
\end{tabular}\right)
\end{equation*}

Consider the map $\Phi_A : Gr(4, \C^8) \to Gr(4, \C^8);$ $\pi \mapsto A(\pi).$ $\Phi_A$ is an holomorphic isometries and it preserves $\omega.$ Hence, $G_{\Phi_A} = \{(\pi, \Phi_A(\pi)): \pi \in Gr(4, \C^8) \}$ is Lagrangian in $(Gr(4, \C^8) \times Gr(4, \C^8), (\omega \times -\omega)).$ Choose $SL(4, \C) \subset GL(8, \C);$ 
\begin{equation*}
     D  \mapsto
\left(\begin{tabular}{ m{0.5cm}| m{0.5cm} } 
  D & 0\\ 
  \hline
 0 & Id\\ 
\end{tabular}\right).
\end{equation*}

 $SL(4, \C)$ acts on $Gr(4, \C^8)\times Gr(4, \C^8)$ diagonally; $D(\pi_1, \pi_2) = (D(\pi(\pi_1), D(\pi_2)).$
 $SL(4, \C)$ preserves $G_{\Phi_A}.$ Indeed, $SL(4, \C) \subset GL(8, \C)$ centralizes $A.$ Therefore, for $g\in SL(4,\C),$
 \begin{align*}
     g(\pi, A(\pi)) &= (g(\pi), g(A(\pi)))\\
     & = (g(\pi), A(g(\pi)).
 \end{align*}

$SL(4, \R)$ is a real form of $SL(4, \R).$ Theorem \ref{convex} applies to $SL(4,\R)$-action on $G_{\Phi_A}.$

\end{example}

\subsection{Two-orbit Variety}

This section proves the non-Abelian convexity results for the two-orbit variety.
\begin{definition}
Let $X$ be a compact and connected $G$-stable submanifold of $(Z,\omega)$. $X$ is a two-orbit variety if the $G$-action on $X$ has two orbits.
\end{definition}
Suppose $X$ is a two-orbit variety. One orbit is open and the other is closed. The closed orbit is a $K$-orbit. Let $\mup : X \to \liep$ is the gradient map for $G$-action on $X$ and $f_\liep(x) := \frac{1}{2}\parallel\mu_\mathfrak{p}(x)\parallel^2$, $x\in X.$ We recall the following result from \cite{Properties}.
\begin{theorem}\cite{Properties}\label{two-orbit-Morse} If $G$ acts on $X$ with two orbits, then $f_\liep$ is Morse-Bott. It has only two connected critical submanifolds given by the closed $G$-orbit, the stratum associated with the maximum of $f_\liep$ and by a $K$-orbit, the stratum associated with the minimum of $f_\liep$.
\end{theorem}

The set $\{z\in X: f_\liep(z) = max_{x\in X}f_\liep(x)\}$ is a closed $G$-orbit and so, a $K$-orbit and $\{z\in X: f_\liep(z) = min_{x\in X}f_\liep(x)\}$ is a $K$-orbit. So, let $\{z\in X: f(z) = max_{x\in X}f(x)\} = K \cdot p_1$ and $\{z\in X: f_\liep(z) = max_{x\in X}f_\liep(x)\} = K \cdot p_2$, where $p_1, p_2\in X.$

Let $\liea \subset \liep$ be a maximal Abelian subalgebra and $\liea_+$ a positive Weyl chamber. It is well known that $K\cdot \liea_+ = \liep$ and if $K \cdot v = K\cdot w$ for any $v, w\in \liea_+,$ then $v = w,$ see e.g \cite{knapp-beyond}.
\begin{theorem}\label{nab}
    If $G$ acts on $X$ with two orbits, then the set $\mu_\liep(X) \cap \liea_+$ is a convex polytope.
\end{theorem}
\begin{proof}
    Let $P = \mu_\liep(X) \cap \liea_+$ and define a function $f: P \to \R,$ given as $$f(v) = \frac{1}{2}\parallel v \parallel^2, \quad v\in P.$$ $P$ is a finite union of polytopes. Then the set
$$\{v\in P: f(v) = max_{w\in P}f(v)\} = P\cap \mu_\liep(K \cdot p_1) = P\cap K\cdot \mu_\liep(p_1) = Q_1.$$ Similarly, 
    $$\{v\in P: f(v) = min_{w\in P}f(v)\} = P\cap \mu_\liep(K \cdot p_2) = P\cap K\cdot \mu_\liep(p_2) = Q_2.$$
    Suppose, $P$ is not convex, then $f$ will attain its maximum on two or more points. This means that there are more than one disjoint maximal sets of $f$ which is a contradiction. Hence, $P$ is a convex polytope.
    
\end{proof}

\medskip

\printbibliography

\end{document}